\newtheorem{theorem}{Theorem}[section]
\newtheorem{corollary}[theorem]{Corollary}
\newtheorem{proposition}[theorem]{Proposition}
\newtheorem{lemma}[theorem]{Lemma}
\newtheorem{conjecture}[theorem]{Conjecture}
\theoremstyle{definition}    
\newtheorem{definition}[theorem]{Definition}
\newtheorem{example}[theorem]{Example}
\newtheorem{remark}[theorem]{Remark}
\theoremstyle{remark}
\newenvironment{thm}{\begin{theorem}}{\end{theorem}}
\newenvironment{prp}{\begin{proposition}}{\end{proposition}}
\newenvironment{crl}{\begin{corollary}}{\end{corollary}}
\newenvironment{dfn}{\begin{definition}}{\end{definition}}
\newenvironment{lmm}{\begin{lemma}}{\end{lemma}}
\newenvironment{rmk}{\begin{remark}}{\end{remark}}
\newcommand{\thmref}[1]{Theorem~\ref{#1}}
\newcommand{\secref}[1]{Section~\ref{#1}}
\newcommand{\lmmref}[1]{Lemma~\ref{#1}}
\newcommand{\prpref}[1]{Proposition~\ref{#1}}
\newcommand{\crlref}[1]{Corollary~\ref{#1}}
\newcommand{\rmkref}[1]{Remark~\ref{#1}}
\newcommand{\Z}{\mathbb{Z}} 
\newcommand{\C}{\mathbb{C}} 
\newcommand{\bbP}{\mathbb{P}} 
\newcommand{\bbA}{\mathbb{A}} 
\newcommand{\GL}{\operatorname{GL}} 
\newcommand{\gl}{\operatorname{\mathfrak{gl}}}
\newcommand{\frg}{\mathfrak{g}} 
\newcommand{\frh}{\mathfrak{h}} 
\newcommand{\frt}{\mathfrak{t}} 
\newcommand{\ad}{\operatorname{ad}}
\newcommand{\End}{\operatorname{End}}
\newcommand{\Aut}{\operatorname{Aut}}
\newcommand{\Hom}{\operatorname{Hom}}
\newcommand{\Ker}{\operatorname{Ker}}
\newcommand{\range}{\operatorname{Im}}
\newcommand{\Coker}{\operatorname{Coker}}
\newcommand{\rank}{\operatorname{rank}}
\newcommand{\tr}{\operatorname{tr}}
\newcommand{\Id}{\mathrm{Id}}
\newcommand{\calO}{\mathcal{O}} 
\newcommand{\calK}{\mathcal{K}} 
\newcommand{\fp}[1]{\ensuremath{[\![#1]\!]} }
\newcommand{\res}{\operatorname*{res}}
\newcommand{\Res}{\operatorname*{Res}}
\newcommand{\calD}{\mathcal{D}} 
\newcommand{\frF}{\mathfrak{F}} 
\newcommand{\calF}{\mathcal{F}} 
\newcommand{\PWA}[1]{\C[#1]\langle \partial_{#1} \rangle} 
\newcommand{\lset}[2]%
{\left\{ \, \left. #1 \hspace{0.25em} \right| \, #2 \, \right\} }
\newcommand{\rset}[2]%
{\left\{ \, #1 \, \left| \hspace{0.25em} #2\right. \, \right\} } 
\newcommand{\ov}{\overline}
\newcommand{\wh}{\widehat}
\newcommand{\wt}{\widetilde}
\newcommand{\bbM}{\mathbb{M}}
\newcommand{\calS}{\mathcal{S}}
\newcommand{\calH}{\mathcal{H}}
\newcommand{\calC}{\mathcal{C}}
\newcommand{\Hol}{\operatorname{\mathcal{H}\hspace{-.1em}\mathit{ol}}}
\newcommand{\add}{\operatorname{\it add}}
\newcommand{\mc}{\operatorname{\it mc}}
\newcommand{\HD}{\operatorname{HD}}
\newcommand{\calV}{\mathcal{V}}
\newcommand{\calA}{\mathcal{A}}
\newcommand{\bfL}{\mathbf{L}}
\newcommand{\calR}{\mathcal{R}}
\title
{Fourier-Laplace transform and isomonodromic deformations
}
\author{Daisuke Yamakawa}
\address{Department of Mathematics,
Tokyo Institute of Technology,
Tokyo 152-8551, Japan}
\email{yamakawa@math.titech.ac.jp}
\thanks{This work was supported by 
JSPS Grant-in-Aid for Young Scientists (B) 
Grant Number 24740104.}
\subjclass[2010]{Primary~34M56; Secondary~32S40}
\keywords{Fourier-Laplace transform, Harnad duality, 
middle convolution, additive middle convolution, 
isomonodromic deformations.}
\begin{document}
\mathtoolsset{showonlyrefs=true}

\begin{abstract}
Using the Fourier-Laplace transform,
we describe the isomonodromy equations 
for meromorphic connections on the Riemann sphere 
with unramified irregular singularities 
as those for connections with 
a (possibly ramified) irregular singularity 
and a regular singularity.
This generalizes some results of Harnad and Woodhouse.
\end{abstract}

\maketitle

\tableofcontents

\section{Introduction}

Let
\[
\xymatrix{
V \ar@<1ex>[r]^P \ar@(lu,ld)[]_S & W \ar@<1ex>[l]^Q \ar@(ru,rd)[]^T
}
\]
be a diagram of finite-dimensional $\C$-vector spaces and linear maps.
Harnad~\cite{Har94} 
associated to such a diagram two meromorphic connections
\begin{equation}\label{eq:HD}
d - \left( S + Q(x 1_W -T)^{-1}P \right) dx, \quad 
d + \left( T + P(y 1_V -S)^{-1}Q \right) dy
\end{equation}
over the Riemann sphere $\bbP^1$, 
and observed that 
if $S, T$ are both regular semisimple,
then the isomonodromy equations for them~\cite{JMMS} coincide.

Harnad's duality of isomonodromic deformations%
\footnote{In this paper we use the term ``isomonodromic deformation'' 
in the de Rham sense, i.e., 
as a deformation of a meromorphic connection on $\bbP^1$
induced from some flat meromorphic connection on 
the product of $\bbP^1$ and the space of deformation parameters.
For the Betti approach to the isomonodromy in terms of monodromy/Stokes data, 
see \cite{JMU,Boa01,Boa13}.}
was generalized by Woodhouse~\cite{Woo07}. 
He examined the isomonodromy equation for a meromorphic connection 
$\nabla = d - A$ on a trivial vector bundle over $\bbP^1$
such that the one-form $A$ is holomorphic at infinity 
and the most singular coefficient of its Laurent expansion at each pole  
has distinct nonzero eigenvalues with no two differing by one.
He constructed some larger connection of the form 
$d - (x-T)^{-1}R\,dx$ with $T, R$ constant matrices
(such a connection is called a {\em generalized Okubo system}~\cite{Kaw})
whose quotient by $\Ker R$ is isomorphic to the original connection $\nabla$,
and then described the isomonodromy equation for $\nabla$ 
as that for the connection $d + (T + R/y)\,dy$ which relates to 
$d - (x-T)^{-1}R\,dx$ via \eqref{eq:HD}.
If $A$ has only at most logarithmic singularities, then 
$T$ is semisimple and his duality essentially reduces to Harnad's with $S=0$.
See also \cite{Boa12} for a generalization of Harnad's duality in another direction.

Correspondence~\eqref{eq:HD} is also used to construct 
the ``additive analogue'' of Katz's {\em middle convolution}~\cite{Kat}.
The middle convolution, 
which plays a key role in the study of rigid local systems~[loc.~cit],
is an operator with one parameter acting on 
the local systems on a punctured $\bbP^1$, 
and its analogue~\cite{DR00} acts on the Fuchsian systems 
(logarithmic connections on the trivial vector bundles on $\bbP^1$).
The two operators almost match up via the Riemann-Hilbert correspondence~\cite{DR07}
and both can be generalized to the irregular singular case: 
the generalized middle convolution~\cite{Ari} 
acts on the meromorphic connections on $\bbP^1$ and 
its additive analogue~\cite{Tak,Yam11} acts on the meromorphic connections 
on trivial bundles on $\bbP^1$. 
The counterpart of correspondence~\eqref{eq:HD} 
in the definition of middle convolution 
is the {\em Fourier-Laplace transform};
their direct relationship was found by Sanguinetti-Woodhouse~\cite{SW}.

The additive middle convolution is useful in its own right; 
Hiroe used it to construct Weyl group symmetries 
of the moduli spaces of meromorphic connections on trivial bundles on $\bbP^1$ 
with some local data around singularities fixed 
and applied it to solve the {\em additive irregular Deligne-Simpson problem},
i.e., gave a necessary and sufficient condition for 
the emptiness of such a moduli space~\cite{Hir}; 
it generalizes the result of Crawley-Boevey~\cite{CB} for the Fuchsian case,
the result of Boalch~\cite{Boa12} 
for the case where one pole is allowed to have order at most $3$,
and the result of himself and the author~\cite{HY} 
for the case where one pole is allowed to have arbitrary order.

It is natural to expect that Hiroe's symmetries induce symmetries of 
isomonodromic deformations.
Woodhouse's result does not imply it because 
the action of many generators of Weyl groups involves 
additive middle convolutions for connections 
which do not satisfy Woodhouse's assumption.


\medskip

In this paper we relax Woodhouse's assumption.
Let $\nabla = d-A$ be a meromorphic connection on $\calO_{\bbP^1}^{\oplus n}$ 
with a pole at infinity of order at most two.
Using our earlier results~\cite{Yam11}, 
we can then ``canonically'' express the one-form $A$ 
in the form $\left( S+Q(x-T)^{-1}P \right)dx$
and define the ``Harnad dual'' $d + \left( T + P(y-S)Q \right)\,dy$ to $\nabla$.
Assume further that $A$ is at most logarithmic at infinity (i.e., $S=0$) 
and that at each pole, 
in terms of a local coordinate $z$ vanishing there,
$\nabla$ is equivalent under the gauge action of $\GL_n(\C\fp{z})$ 
to an ``unramified normal form with non-resonant residue'', 
i.e., to a connection of the form $d - d \Lambda - L\,dz/z$,
where $\Lambda(z)$ is a diagonal matrix with entries in $z^{-1}\C[z^{-1}]$
and $L$ is an element of the Lie algebra  
$\frh = \{\, X \in \gl_n(\C) \mid [X,\Lambda]=0 \,\}$
such that $\ad_L \in \End(\frh)$ has no nonzero integral eigenvalue.%
\footnote{The normal forms are a basic notion in 
the formal classification theory of meromorphic connections;
see e.g.~\cite{BV}, where they are called the ``canonical forms''.
It is well-known that at each pole, 
if the most singular coefficient of the Laurent expansion of $A$ 
has distinct eigenvalues as in Woodhouse's case 
then $\nabla$ is equivalent (in the above sense) 
to an unramified normal form with non-resonant residue.}
We introduce the {\em admissible families} of such connections 
and (as its particular class) the isomonodromic deformations,
which may be viewed as a de Rham counterpart of the Poisson local systems 
established in \cite{Boa13} 
and generalizes the isomonodromic deformations of 
Jimbo, Miwa and Ueno~\cite{JMU}.
We show that if an admissible family is isomonodromic 
and the associated family on the Harnad dual side has a constant bundle rank, 
then it is also isomonodromic,
and vice versa provided that $A$ is irreducible or $\res_\infty A$ is invertible
(\thmref{thm:main}).
This result implies that the additive middle convolution preserves 
the isomonodromy property of admissible families of meromorphic connections 
under some assumptions (\crlref{crl:mc} and \rmkref{rmk:admissible}), 
generalizing a result of Haraoka-Filipuk~\cite{HF}.

\medskip

The organization of this paper is as follows.
In \secref{sec:FL}, 
we review the definition of the Harnad dual operation 
and its basic properties, especially the relationship 
with the Fourier-Laplace transform. 
The results presented in 
Sections~\ref{subsec:AHHP}--\ref{subsec:properties}
are not new, and Sections~\ref{subsec:minimal},~\ref{subsec:normal} 
contain a relationship between our canonical expression of 
connections in the form $\left( S+Q(x-T)^{-1}P \right)dx$ 
and the minimal extension of $\calD$-modules (see~\thmref{thm:minimal}).
\secref{sec:IMD} is devoted to show our main results.

\section{Fourier-Laplace transform and Harnad dual}\label{sec:FL}

Let us first recall what is the Fourier-Laplace transform.

Fix a base point $\infty \in \bbP^1$ and 
a standard coordinate $x$ on 
$\bbA^1 = \bbP^1 \setminus \{ \infty \}$.
Let $\calV$ be an algebraic vector bundle 
on some Zariski open subset $U$ of $\bbP^1$
equipped with a connection $\nabla_\calV$.
We regard it as a $\calD_U$-module in the obvious way.
Shrinking $U$ so that $U \subset \bbA^1$ if necessary,
let $j \colon U \to \bbA^1$ be the inclusion map 
and $j_{!*} \calV$ the minimal extension of $\calV$ (see e.g.~\cite{HTT,Kat90}), 
which is a $\calD_{\bbA^1}$-module and hence 
may be regarded as a module over the one-variable Weyl algebra $\PWA{x}$
(by taking the global sections $\Gamma$).
Let $\PWA{y}$ act on $\Gamma(j_{!*}\calV)$ by 
$y = -\partial_x$, $\partial_y =x$.
Then we obtain a new $\calD_{\bbA^1}$-module $\frF (j_{!*}\calV)$,
called the {\em Fourier transform} of $j_{!*}\calV$.
Since it is holonomic, 
we can take a maximal Zariski open subset $U' \subset \bbA^1$ such that 
$\calF(\calV) := \frF(j_{!*}\calV)|_{U'}$ is an algebraic vector bundle with connection,
called the {\em Fourier-Laplace transform} of $\calV$.

In this section we describe $\calF(\calV)$ 
when $\calV$ comes from a trivial holomorphic vector bundle over $\bbP^1$ 
equipped with a meromorphic connection $d-A$ 
which has a pole at $\infty$ of order at most two
and satisfy some nice condition at each pole.

\subsection{AHHP representation and Fourier transform}\label{subsec:AHHP}

\begin{lmm}[{\cite[Lemma 4]{Yam11}}]\label{lmm:exist}
Let $V$ be a nonzero finite-dimensional $\C$-vector space 
and $A$ an $\End_\C(V)$-valued meromorphic one-form with pole at $\infty$
of order at most two. 
Then there exists a finite-dimensional $\C$-vector space $W$ 
and an endomorphism 
\[
\gamma = 
\begin{pmatrix}
S & Q \\
P & T
\end{pmatrix}
\in \End_\C(V \oplus W)
\]
such that $A = \left( S + Q(x 1_W - T)^{-1}P \right) dx$.
\end{lmm}

Such an expression of meromorphic one-forms appears in \cite{AHP88,AHH90},
so we call it an {\em AHHP representation}.
We will explain how an AHHP representation relates to the Fourier transform
following Sanguinetti-Woodhouse~\cite{SW} (in a different convention).
Put $A(x) = \langle A, \partial_x \rangle$ (so $A=A(x)dx$)
and let $U \subset \bbA^1$ be the set of all non-singular points of $A$ in $\bbA^1$.
Define an injective left $\calD_U$-endomorphism 
$\varphi_A$ of $\calD_U \otimes_\C V$ by 
\[
\varphi_A (f v) = f (\partial_x + A(x) ) v  \quad ( f \in \calD_U,\ v \in V).
\]
Then we have the following short exact sequence:
\[
\xymatrix{
0 \ar[r] & 
\calD_U \otimes_\C V \ar[r]^{\varphi_A} &  
\calD_U \otimes_\C V \ar[r] & \calV \ar[r] & 0,
}
\]
where $\calV$ is the vector bundle $\calO_U \otimes_\C V$ 
equipped with the connection $d-A$, regarded as a left $\calD_U$-module,
and $\calD_U \otimes_\C V \to \calV$ is the map 
canonically induced from 
the $\calD_U$-module structure of $\calV$.
On the other hand, 
for a finite-dimensional $\C$-vector space $W$ and an endomorphism 
$\gamma \in \End_\C(V \oplus W)$,
define an injective left $\calD_{\bbA^1}$-module endomorphism $\varphi_\gamma$ of 
$\calD_{\bbA^1} \otimes_\C (V \oplus W)$ by 
\[
\varphi_\gamma \colon f  
\begin{pmatrix} v \\ w \end{pmatrix}
\mapsto 
f 
\begin{pmatrix} \partial_x + S & Q \\ -P & x - T \end{pmatrix}
\begin{pmatrix} v \\ w \end{pmatrix}
\quad ( f \in \calD_{\bbA^1},\ v \in V,\ w \in W),
\] 
where $S, Q, P, T$ are the blocks of $\gamma$,
and set $\calV_\gamma = \Coker \varphi_\gamma$.
The equality 
\[
\begin{pmatrix} \partial_x + S & Q \\ -P & x - T \end{pmatrix}
\begin{pmatrix} 1_V \\ (x -T)^{-1}P \end{pmatrix}
=
\begin{pmatrix} \partial_x + S + Q(x -T)^{-1}P \\ 0 \end{pmatrix}
\]
shows that
if $A(x) = S + Q(x 1_W - T)^{-1}P$, 
then the following diagram with exact rows commutes:
\[
\xymatrix{
0 \ar[r] &
\calD_U \otimes_\C V \ar[r]^{\varphi_A} \ar[d]^{\iota_1} &  
\calD_U \otimes_\C V \ar[r] \ar[d]^{\iota_2} & \calV \ar[r] & 0 \\
0 \ar[r] &
\calD_U \otimes_\C (V \oplus W) \ar[r]^{\varphi_\gamma} &  
\calD_U \otimes_\C (V \oplus W) \ar[r] & \calV_\gamma |_U \ar[r] & 0
}
\]
where $\iota_1, \iota_2$ are defined by 
\[
\iota_1 (f v) =   
\begin{pmatrix} f v \\ f(x 1_W -T)^{-1}Pv \end{pmatrix},
\quad
\iota_2 (f v) =   
\begin{pmatrix} f v \\ 0 \end{pmatrix}
\quad ( f \in \calD_U,\ v \in V).
\]
The commutativity and exactness imply that $\iota_2$ descends to  
a homomorphism $\iota \colon \calV \to \calV_\gamma |_U$.
Since the natural inclusion map 
$\calD_U \otimes_\C W \to \calD_U \otimes_\C (V \oplus W)$ 
enables us to identify each $\Coker \iota_j$ with 
$\calD_U \otimes_\C W$ and the homomorphism 
\[
\calD_U \otimes_\C W = \Coker \iota_1 \to \Coker \iota_2 = \calD_U \otimes_\C W
\]
induced from $\varphi_\gamma$, which is given by 
\[
f w \mapsto f(x-T)w \quad (f \in \calD_U,\ w \in W),
\]
is an isomorphism,
we see (e.g.\ from the snake lemma) that  
$\iota$ is an isomorphism.
Therefore a pair $(W,\gamma)$ 
as in \lmmref{lmm:exist}
give a $\calD_{\bbA^1}$-module $\calV_\gamma$ 
which is an extension of $\calV$ to $\bbA^1$.

Also, the Fourier transform $\frF(\calV_\gamma)$ of $\calV_\gamma$ 
is the cokernel of the endomorphism $\psi_\gamma$ of 
$\calD_{\bbA^1} \otimes_\C (V \oplus W)$ given by 
\[
\psi_\gamma \colon f  
\begin{pmatrix} v \\ w \end{pmatrix}
\mapsto 
f 
\begin{pmatrix} -y + S & Q \\ -P & \partial_y - T \end{pmatrix}
\begin{pmatrix} v \\ w \end{pmatrix}
\quad ( f \in \calD_U,\ v \in V,\ w \in W).
\] 
A similar argument based on the equality
\[
\begin{pmatrix} -y + S & Q \\ -P & \partial_y - T \end{pmatrix}
\begin{pmatrix} (y-S)^{-1}Q \\ 1_W \end{pmatrix}
=
\begin{pmatrix} 0 \\ \partial_y -T - P(y -S)^{-1}Q \end{pmatrix}
\]
shows that if we define $U'$ to be $\bbA^1$ minus the spectra of $S$,
then $\frF(\calV_\gamma) |_{U'}$ is isomorphic to 
the algebraic vector bundle $\calO_{U'} \otimes_\C W$ 
equipped with the connection $d + \left( T + P(y 1_V -S)^{-1}Q \right) dy$.

\subsection{Categorical treatment}\label{subsec:cat}

The categorical treatment of the previous arguments 
will make the story clearer.

Let $\calS$ be the category of pairs consisting of 
a holomorphically trivial vector bundle $\calV$ on $\bbP^1$ 
and a meromorphic connection $\nabla_\calV$ on $\calV$
having a pole at $\infty$ of order at most two.
The morphisms in $\calS$ are holomorphic bundle maps intertwining the connections.
We identify $\calS$ with the category of pairs $(V,A)$ 
consisting of a finite-dimensional $\C$-vector space $V$ 
and an $\End_\C(V)$-valued rational one-form $A$ 
having a pole at $\infty$ of order at most two.
The morphisms $(V,A) \to (V',A')$ in $\calS$
are then the linear maps $\varphi \colon V \to V'$ satisfying 
$A' \varphi = \varphi A$.

Let $\calH$ be the category of tuples $(V,W;\gamma)=(V,W;S,T,Q,P)$ 
consisting of two finite-dimensional $\C$-vector spaces $V,W$ and 
an endomorphism
\[
\gamma = \begin{pmatrix} S & Q \\ P & T \end{pmatrix}
\in \End_\C(V \oplus W).
\]
The morphisms $(V,W;\gamma) \to (V',W';\gamma')$ in $\calH$
are the pairs $(\varphi,\psi)$ of linear maps 
$\varphi \colon V \to V'$, $\psi \colon W \to W'$
satisfying $(\varphi \oplus \psi)\gamma = \gamma'(\varphi \oplus \psi)$.

The previous arguments lead to the definition of the following functor $\calH \to \calS$:
For $(V,W;\gamma)=(V,W;S,T,Q,P) \in \calH$, 
define an object $\Phi(V,W;\gamma)=(V,A)$ of $\calS$ by
\[
A = d - \left( S + Q(x 1_W -T)^{-1}P \right) dx.
\]
If $(\varphi,\psi)$ is a morphism from $(V,W;\gamma)$ to $(V',W';\gamma')$ in $\calH$,
then it is easy to see that 
$\varphi \colon V \to V'$ 
is a morphism from $\Phi(V,W;\gamma)$ to $\Phi(V',W';\gamma')$.
Thus we obtain a functor 
\[
\Phi \colon (V,W;\gamma) \mapsto \left( V, \left( S + Q(x 1_W -T)^{-1}P\right) dx \right),
\quad (\varphi,\psi) \mapsto \varphi
\]
from $\calH$ to $\calS$.
(We will denote it by $\Phi_x$ when emphasizing the choice of coordinate $x$.)
\lmmref{lmm:exist} implies that $\Phi$ is essentially surjective.

To treat vector bundles with connection 
on various Zariski open subsets in $\bbA^1$ at once,
we introduce the category $\calC$
of vector bundles with connection over the generic point of $\bbA^1$, i.e., 
the category of finite-dimensional $\C(x)$-vector spaces $\calV$ 
equipped with a $\C$-linear map 
$\nabla \colon \calV \to \calV \otimes_\C \C(x)\,dx$
satisfying the Leibniz rule. 
Let $\Hol(\PWA{x})$ be the category of holonomic (left) $\PWA{x}$-modules.
Then the restriction gives the functors 
\begin{alignat*}{2}
p &\colon \calS \to \calC; &\quad
(V, A) &\mapsto (\C(x) \otimes_\C V, d-A), \\
q &\colon \Hol(\PWA{x}) \to \calC; &\quad
M &\mapsto \C(x) \otimes_\C \Gamma(M),
\end{alignat*}
and the minimal extension gives a functor 
$e \colon \calC \to \Hol(\PWA{x})$.
It is natural to regard the Fourier-Laplace transform $\calF$ as the composite
\[
q \circ \frF \circ e \colon \calC \to \calC.
\]

The previous arguments show that 
$p \circ \Phi$ factors through the functor $\wt{\Phi}$ from $\calH$ to $\Hol(\PWA{x})$
given by $(V,W;\gamma) \to \calV_\gamma$: $p \circ \Phi = q \circ \wt{\Phi}$.

Also we have the functor $\sigma \colon \calH \to \calH$ defined by
\[
(V,W;S,T,Q,P) \mapsto (W,V;-T,S,P,-Q), \quad (\varphi,\psi) \mapsto (\psi,\varphi).
\]
The composite $\Phi_y \circ \sigma$ is described as 
\[
(V,W;\gamma) \mapsto \left( W,- \left( T + P(y 1_V -S)^{-1}Q \right) dy \right),
\quad (\varphi,\psi) \mapsto \psi,
\]
and the previous arguments show that 
\[
\wt{\Phi}_y \circ \sigma = \frF \circ \wt{\Phi}_x, 
\]
where the subscripts mean the choice of coordinate.

\subsection{Canonical section and Harnad dual}\label{subsec:HD}

Note that for fixed $(V,A) \in \calS$, 
an object $(V,W;\gamma) \in \calH$ satisfying $\Phi(V,W;\gamma)=(V,A)$ 
is not unique.
However, we can show that a stable object in the following sense is 
essentially unique:

\begin{dfn}\label{dfn:stable}
An object $(V,W;\gamma) \in \calH$ 
is said to be {\em stable} if 
the following two conditions hold:
\begin{enumerate}
\item if a subspace $W' \subset W$ satisfies 
$\gamma (V \oplus W') \subset V \oplus W'$, then $W'=W$;
\item if a subspace $W' \subset W$ satisfies 
$\gamma (0 \oplus W') \subset 0 \oplus W'$, then $W'=0$.
\end{enumerate}
\end{dfn}

\begin{prp}[{\cite[Theorem 1]{Yam11}}]\label{prp:unique}
For any $(V,A) \in \calS$ with $V \neq 0$, 
there exists a stable object $(V,W;\gamma) \in \calH$ satisfying 
$\Phi(V,W;\gamma)=(V,A)$.
If another stable object $(V,W';\gamma')$ satisfies the same condition,
then there exists an isomorphism $f \colon W \xrightarrow{\simeq} W'$
such that $\gamma' (1_V \oplus f) = (1_V \oplus f) \gamma$;
in particular, the two objects are isomorphic.
\end{prp}

In fact, we can construct a ``section'' $\kappa \colon \calS \to \calH$ 
of $\Phi$ such that $\kappa(V,A)$ is stable for any $(V,A) \in \calS$ as follows:

Let $(V,A) \in \calS$.
Label the poles of $A$ in $\bbA^1$ as 
$t_1, t_2, \dots , t_m$ and write  
\[
A(x) = A_0 + \sum_{i=1}^m \sum_{j=1}^{k_i}
\frac{A^{(i)}_j}{(x-t_i)^j},
\quad S,\, A^{(i)}_j \in \End_\C(V),
\] 
where $k_i \in \Z_{>0}$ is the pole order of $A(x)$ at $x=t_i$.
For $i=1,2, \dots , m$, put $x_i =x-t_i$ and 
let $A_i = \sum_{j=1}^{k_i} A^{(i)}_j x_i^{-j} dx$
be the principal part of the Laurent expansion of $A$ at $x_i=0$.
We set
\[
\wh{A}_i = x_i^{k_i} \langle A_i , \partial_{x_i} \rangle 
= \sum_{j=1}^{k_i} A^{(i)}_j x_i^{k_i- j},
\]
which we regard as an element of  
\[
\End_\C(V) \otimes_\C \calR_i 
\simeq \End_{\calR_i}(V \otimes_\C \calR_i), 
\quad \calR_i := \C[x_i]/(x_i^{k_i}).
\]
Also set
\[
W_i = V \otimes_\C \calR_i/\Ker \wh{A}_i.
\]
Note that we have a natural isomorphism
\begin{equation}\label{eq:species1}
\Hom_\C(V,W_i) \simeq \Hom_{\calR_i}(V \otimes_\C \calR_i,W_i);\quad
Y \mapsto \left[ \wt{Y} \colon v \otimes a \mapsto Y(v)a \right],
\end{equation}
and that the tensor-hom adjunction and 
the non-degenerate pairing 
\[
\calR_i \otimes_\C \calR_i \to \C; 
\quad f(x_i) \otimes g(x_i) \mapsto 
\res_{x_i=0} \left( x_i^{-k_i}f(x_i)g(x_i) \right)
\]
yield
\begin{equation}\label{eq:species2}
\begin{aligned}
\Hom_\C(W_i,V) &\simeq 
\Hom_\C(W_i \otimes_{\calR_i} \calR_i, V) \\
&\simeq \Hom_{\calR_i}(W_i,\Hom_\C(\calR_i,V)) \\
&\simeq \Hom_{\calR_i}(W_i,V \otimes_\C \calR_i^*) 
\simeq \Hom_{\calR_i}(W_i,V \otimes_\C \calR_i),
\end{aligned}
\end{equation}
under which a linear map
$X \colon W_i \to V$
corresponds to the $\calR_i$-homomorphism
\[
\wt{X} : W_i \to V \otimes_\C \calR_i;\quad 
w \mapsto \sum_{j=1}^{k_i} X(x_i^{j-1} w) x_i^{k_i-j}.
\]
Now decompose $\wh{A}_i$ as $\wh{A}_i = \wt{Q}_i \wt{P}_i$,
where 
$\wt{P}_i \colon V \otimes_\C \calR_i \to W_i$
is the natural projection and
$\wt{Q}_i \colon W_i \to V \otimes_\C \calR_i$ is 
the injective homomorphism induced from $\wh{A}_i$.
These induce linear maps  
$P_i \colon V \to W_i$,
$Q_i \colon W_i \to V$ 
through \eqref{eq:species1}, \eqref{eq:species2}, respectively.  
Let $N_i \in \End_{\calR_i}(W_i)$ be
the endomorphism representing the multiplication by $x_i$.
Then a direct calculation shows
\[
\wh{A}_i = \wt{Q}_i \wt{P}_i 
= \sum_{j=1}^l Q_i N_i^{j-1} P_i x_i^{k_i-j} 
= x_i^{k_i} Q_i (x_i\,1_{W_i} -N_i)^{-1} P_i,
\]
and hence
\[
A_i = Q_i (x_i\,1_{W_i} -N_i)^{-1} P_i\, dx_i.
\]
Define $\kappa(V,A)=(V,W;\gamma)$ by $W = \bigoplus_{i=1}^m W_i$ and  
\begin{gather*}
S=A_0 \in \End_\C(V), \quad
T=\bigoplus_{i=1}^m (t_i\,1_{W_i} + N_i) \in \End_\C(W), \\
Q = 
\begin{pmatrix}
Q_1 & \cdots & Q_m
\end{pmatrix}
\in \Hom_\C(W,V),
\quad
P = 
\begin{pmatrix}
P_1 \\ \vdots \\ P_m
\end{pmatrix}
\in \Hom_\C(V,W).
\end{gather*}
Then 
\[
S+Q(x 1_W -T)^{-1}P 
= A_0 + \sum_{i=1}^m Q_i \left( (x-t_i) 1_{W_i} -N_i \right)^{-1} P_i 
= A(x).
\]
Hence $\kappa(V,A)$ is an object of $\calH$ satisfying 
$\Phi(\kappa(V,A))=(V,A)$.

Any morphism $\varphi \colon (V,A) \to (V',A')$ in $\calS$ induces 
a morphism $\psi$ from $\kappa(V,A)$ to $\kappa(V',A')=(V',W';\gamma')$ 
as follows:
Take a subset $I \subset \{ 1,2, \dots , m \}$ so that 
$\{\, t_i \mid i \in I \,\}$ is the set of common poles of $A, A'$. 
For $i \in I$, 
let $k'_i$ be the pole order of $A'$ at $t_i$,
put $l_i = \max\{ k_i,k'_i \}$ and set
\[
\calR'_i = \C[x_i]/(x_i^{k'_i}), 
\quad \calR''_i = \C[x_i]/(x_i^{l_i}).
\]
Recall that  
$A'$ induces $\wh{A}'_i \in \End_{\calR'_i}(V' \otimes_\C \calR'_i)$ 
and $W'_i = V' \otimes_\C \calR'_i/\Ker \wh{A}'_i$. 
The map $\varphi$ induces a $\calR''_i$-homomorphism 
\[
\varphi \otimes 1 \colon V \otimes_\C \calR''_i/\Ker x_i^{l_i-k_i} \wh{A}_i \to
V' \otimes_\C \calR''_i/\Ker x_i^{l_i-k'_i} \wh{A}'_i,
\]
and the natural projections 
$\C[x_i]/(x_i^{l+k}) \to \left( \C[x_i]/(x_i^{l+k}) \right)/(x_i^k) 
= \C[x_i]/(x_i^l)$ for $k,l \geq 0$ give isomorphisms 
\begin{align*}
V \otimes_\C \calR''_i/\Ker x_i^{l_i-k_i} \wh{A}_i &\simeq
V \otimes_\C \calR_i/\Ker \wh{A}_i =W_i, \\
V' \otimes_\C \calR''_i/\Ker x_i^{l_i-k'_i} \wh{A}'_i &\simeq
V' \otimes_\C \calR'_i/\Ker \wh{A}'_i = W'_i,
\end{align*}
which induce a map $\psi_i \colon W_i \to W'_i$ from $\varphi \otimes 1$ for $i \in I$.
Note that we have 
the natural projection $W \twoheadrightarrow \bigoplus_{i \in I} W_i$
and injection $\bigoplus_{i \in I} W'_i \hookrightarrow W'$.
Let $\psi \colon W \to W'$ be the composite 
\[
W \twoheadrightarrow \bigoplus_{i \in I} W_i 
\xrightarrow{\bigoplus_{i \in I} \psi_i} \bigoplus_{i \in I} W'_i
\hookrightarrow W'.
\]
Then one can easily check that $\kappa(\varphi):=(\varphi,\psi)$
is a morphism from $(V,W;\gamma)$ to $(V',W';\gamma')$ in $\calH$.

Thus we obtain a functor $\kappa \colon \calS \to \calH$ satisfying
$\Phi \circ \kappa = \Id$.
(We will denote it by $\kappa_x$ when emphasizing the choice of coordinate.)

\begin{dfn}\label{dfn:canonical}
We call the functor $\kappa$ the \emph{canonical section} of $\Phi$.
\end{dfn}

\begin{prp}\label{prp:canonical}
The object $(V,W;\gamma)=\kappa(V,A)$ is stable for any $(V,A) \in \calS$.
\end{prp}

\begin{proof}
If a subspace $W' \subset W$ satisfies 
$\gamma (V \oplus W') \subset V \oplus W'$, 
then in particular it is $T$-invariant and hence 
\[
W' = \bigoplus_{i=1}^m (W_i \cap W'), \quad N_i(W_i \cap W') \subset W_i \cap W'.
\]
Furthermore, the condition $\gamma (V \oplus W') \subset V \oplus W'$ 
implies $\range\wt{P}_i \subset W_i \cap W'$ for all $i$.
Since $\wt{P}_i$ are surjective,
we obtain $W_i \cap W' = W_i$ for all $i$, i.e., $W'=W$.

If $W'$ satisfies $\gamma (0 \oplus W') \subset 0 \oplus W'$,
then it is $T$-invariant and $\Ker \wt{Q}_i \supset W_i \cap W'$ for all $i$.
Since $\wt{Q}_i$ are injective, $W_i \cap W' =0$ for all $i$, i.e., $W'=0$.
\end{proof}

Using the canonical section, we introduce the following functor,
which may be regarded as an ``additive analogue'' of the Fourier-Laplace transform:

\begin{dfn}\label{dfn:HD}
We call $\HD := \Phi \circ \sigma \circ \kappa \colon \calS \to \calS$ 
the \emph{Harnad dual functor}
and $\ov{\HD} := \Phi \circ \sigma^{-1} \circ \kappa \colon \calS \to \calS$ 
the \emph{inverse Harnad dual functor}.
\end{dfn}

\subsection{Properties of canonical section and Harnad dual}\label{subsec:properties}

The canonical section $\kappa$ has some nice properties.
First, $\kappa$ preserves the natural direct sum operation
(the proof is immediate):
\begin{prp}\label{prp:sum}
For $(V,A), (V',A') \in \calS$, there exists a natural isomorphism
\[
\kappa (V \oplus V', A \oplus A') \simeq \kappa (V,A) \oplus \kappa (V',A')
\]
of the form $(1_{V \oplus V'}, \psi)$.
\end{prp}

Next, $\kappa$ preserves the irreducibility in the following sense:

\begin{dfn}\label{dfn:irreducible}
(i) An object $(V,A) \in \calS$ is \emph{irreducible}
if there exists no subspace $V' \subset V$ 
such that $A(V' \otimes_\C \C(x)) \subset V' \otimes_\C \C(x)\,dx$
except $V'=0,V$.

(ii) An object $(V,W;\gamma) \in \calH$ is said to be \emph{irreducible} 
if there exists no pair of subspaces $V' \subset V,\, W' \subset W$
such that $\gamma(V' \oplus W') \subset V' \oplus W'$
except $(V',W')=(0,0), (V,W)$.
\end{dfn}

\begin{prp}[{\cite[Lemmas 8, 9]{Yam11}}]\label{prp:irreducible}
{\rm (i)} Suppose that $(V,W;\gamma) \in \calH$ is irreducible.
If $V\neq 0$, then it is stable, while 
if $W\neq 0$, then $\sigma(V,W;\gamma)$ is stable.

{\rm (ii)} If $(V,W;\gamma) \in \calH$ and $V \neq 0$, 
then $(V,W;\gamma)$ is irreducible if and only if $\Phi(V,W;\gamma)$ 
is irreducible. In particular, 
an object $(V,A) \in \calS$ with $V \neq 0$ is irreducible 
if and only if $\kappa(V,A) \in \calH$ is irreducible.
\end{prp}

Using the above proposition we can show that 
the functor $\HD$ also preserves the irreducibility
and has a sort of inversion formula:

\begin{thm}[{\cite[Theorem 7]{Yam11}}]\label{thm:inversion}
Suppose that $(V,A) \in \calS$ is irreducible 
and not isomorphic to an object of the form $(\C,c\,dx),\, c \in \C$.
Then $\HD(V,A)$ is also irreducible and 
\[
\ov{\HD} \circ \HD (V,A) \simeq (V,A).
\]
\end{thm}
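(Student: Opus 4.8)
The plan is to reduce everything to the uniqueness of stable representatives (\prpref{prp:unique}), the identity $\Phi\circ\kappa=\Id$ (\dfnref{dfn:canonical}), and the stability of the objects $\kappa(V,A)$ (\prpref{prp:canonical}), and then to chase the resulting isomorphisms through the three functors $\Phi,\kappa,\sigma$. Write $\kappa(V,A)=(V,W;\gamma)$ with blocks $S,T,Q,P$ as in the construction of $\kappa$. The first thing I would check is that $W\neq0$: by construction $W=\bigoplus_{i=1}^{m}W_i$ with $W_i=V\otimes_\C\calR_i/\Ker\wh{A}_i$, and each $\wh{A}_i$ is nonzero because $t_i$ is a genuine pole of $A$, so $W_i\neq0$; hence $W=0$ would force $A$ to have no pole in $\bbA^1$, i.e.\ $A=S\,dx$ with $S\in\End_\C(V)$ constant, and then an eigenvector of $S$ spans an $A$-invariant line, so irreducibility gives $\dim_\C V=1$ and $(V,A)\simeq(\C,c\,dx)$, which is excluded. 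This is the only point at which the hypotheses are used, and it is exactly where the excluded object enters.

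For the irreducibility of $\HD(V,A)$ I would argue as follows. Since $V\neq0$ and $(V,A)$ is irreducible, \prpref{prp:irreducible}(ii) shows that $\kappa(V,A)=(V,W;\gamma)$ is irreducible in $\calH$. Irreducibility is visibly preserved by $\sigma$, since an invariant pair of subspaces for $\sigma(V,W;\gamma)$ is the same datum as a $\gamma$-invariant pair of subspaces with the two factors interchanged; thus $\sigma\kappa(V,A)$ is irreducible, and its first underlying space is $W\neq0$. Applying \prpref{prp:irreducible}(ii) once more, now to $\sigma\kappa(V,A)$, we conclude that $\HD(V,A)=\Phi\sigma\kappa(V,A)$ is irreducible.

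For the inversion formula the key remark is that \emph{if $\mathcal G\in\calH$ is stable then there is an isomorphism $\kappa\Phi(\mathcal G)\xrightarrow{\ \sim\ }\mathcal G$ whose first component is the identity}: indeed $\kappa\Phi(\mathcal G)$ is stable by \prpref{prp:canonical} and, since $\Phi\circ\kappa=\Id$, has the same image as $\mathcal G$ under $\Phi$, so \prpref{prp:unique} supplies such an isomorphism. I would apply this to $\mathcal G=\sigma\kappa(V,A)$, which is stable by \prpref{prp:irreducible}(i) because $\kappa(V,A)$ is irreducible and $W\neq0$; this gives
\[
\kappa\bigl(\HD(V,A)\bigr)=\kappa\Phi\sigma\kappa(V,A)\xrightarrow{\ \sim\ }\sigma\kappa(V,A)
\]
via a morphism whose first component is $1_W$. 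Applying $\sigma^{-1}$, which like $\sigma$ interchanges the two maps of a morphism (as $\sigma^{2}$ is the identity on morphisms), yields an isomorphism $\sigma^{-1}\kappa(\HD(V,A))\xrightarrow{\ \sim\ }\kappa(V,A)$ whose \emph{second} component is $1_W$ and whose first component is an isomorphism $f$ of vector spaces; applying $\Phi$ and using $\ov{\HD}=\Phi\circ\sigma^{-1}\circ\kappa$, $\Phi\circ\kappa=\Id$, and the fact that $\Phi$ records the first component of a morphism then promotes $f$ to an isomorphism $\ov{\HD}\circ\HD(V,A)\xrightarrow{\ \sim\ }(V,A)$ in $\calS$.

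The argument is entirely formal, so there is no deep obstacle; the one point that needs care is to make sure that every appeal to \prpref{prp:irreducible}, including part~(i), is made for an object whose first underlying space is nonzero — this is guaranteed by the computation $W\neq0$ above, which is precisely what ruling out $(\C,c\,dx)$ provides. The only other thing to keep straight is which slot of a morphism in $\calH$ is swapped by each application of $\sigma^{\pm1}$, since it is this swap that makes the final application of $\Phi$ return the isomorphism $f$ rather than an identity map.
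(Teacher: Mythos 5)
Your argument is correct, and it follows the route the paper itself indicates (the paper only cites \cite[Theorem~7]{Yam11}, prefacing the statement with ``Using the above proposition we can show\dots''): ruling out $(\C,c\,dx)$ to get $W\neq 0$, then combining \prpref{prp:irreducible} with \prpref{prp:unique} and \prpref{prp:canonical}, exactly as you do. The bookkeeping of how $\sigma^{\pm1}$ swaps the components of morphisms and how the first/second slots enter \prpref{prp:unique} is handled correctly, so no gaps.
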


The functor $\Phi$ also has some important geometric properties. 
In the rest of this subsection, we fix 
two finite-dimensional $\C$-vector spaces $V \neq 0$, $W$
and endomorphisms $S \in \End_\C(V)$, $T \in \End_\C(W)$.
Set 
\[
\bbM \equiv \bbM(V,W) = \{\, (Q,P) \mid Q \in \Hom_\C(W,V),\ P \in \Hom_\C(V,W)\,\},
\]
which we equip with a symplectic form $\tr dQ \wedge dP$.
Let $G_S \subset \GL(V)$ (resp.\ $G_T \subset \GL(W)$) 
be the centralizer of $S$ (resp.\ $T$)
and $\frg_S$ (resp.\ $\frg_T$) its Lie algebra.
The group $G_S \times G_T$ acts on $\bbM$ by
\[
(g,h) \colon (Q,P) \mapsto (h Q g^{-1}, g P h^{-1}). 
\]
We label the eigenvalues of $T$ and their algebraic multiplicities as 
$t_i, k_i$, $i=1,2, \dots , m$ and set
\[
\wt{G}(T) = \prod_{i=1}^m \Aut_{\calR_i}(V \otimes_\C \calR_i),
\quad \calR_i = \C[x_i]/(x_i^{k_i}).
\]
Let $\wt{\frg}(T)$ be its Lie algebra:
\[
\wt{\frg}(T) = 
\bigoplus_{i=1}^m \gl(V) \otimes_\C \calR_i
\simeq \bigoplus_{i=1}^m \End_{\calR_i}(V \otimes_\C \calR_i),
\]
and set
\[
\wt{\frg}^*(T) = 
\bigoplus_{i=1}^m \bigoplus_{j=1}^{k_i} \gl(V) x_i^{-j}dx_i,
\] 
which we embed into $\gl(V) \otimes_\C \C(x)\,dx$ via $x_i = x-t_i$
and identify with the $\C$-dual to $\wt{\frg}(T)$ using the pairing
\[
(A,X) := \sum_{i=1}^m \Res_{x_i=0} \tr X_i A_i, \quad
X =(X_i) \in \wt{\frg}(T),\ A=(A_i) \in \wt{\frg}^*(T).
\]
We let $\wt{G}(T)$ act on $\bbM$ as follows:
For $i=1,2, \dots , m$, let $W_i \subset W$ be 
the generalized $t_i$-eigenspace for $T$ and 
\begin{itemize}
\item $N_i := T|_{W_i} - t_i\,1_{W_i} \in \End_\C(W_i)$;
\item $Q \mapsto Q_i$ the restriction $\Hom_\C(W,V) \to \Hom_\C(W_i,V)$;
\item $P \mapsto P_i$ the projection $\Hom_\C(V,W) \to \Hom_\C(V,W_i)$.
\end{itemize} 
For $g=(g_i) \in \wt{G}(T)$ and $Q \in \Hom_\C(W,V)$, 
define $g \cdot Q =Q' \in \Hom_\C(W,V)$ by
\[
\wt{Q}'_i = g_i \cdot \wt{Q}_i \in \Hom_{\calR_i}(W_i,V \otimes_\C \calR_i),
\]
or equivalently,
\begin{equation}\label{eq:action-Q}
Q'_i = \sum_{j=0}^{k_i-1} g^{(i)}_j Q_i N_i^j, \quad g_i(x_i) 
= \sum_{j=0}^{k_i-1}g^{(i)}_j x_i^j. 
\end{equation}
Similarly, for $P \in \Hom_\C(V,W)$ define $g \cdot P =P' \in \Hom_\C(V,W)$ by
\[
\wt{P}'_i = \wt{P}_i \cdot g_i^{-1} \in \Hom_{\calR_i}(V \otimes_\C \calR_i,W_i),
\]
or equivalently,
\begin{equation}\label{eq:action-P}
P'_i = \sum_{j=0}^{k_i-1} N_i^j P_i\, \bar{g}^{(i)}_j,
\quad g_i(x_i)^{-1} = \sum_{j=0}^{k_i-1}\bar{g}^{(i)}_j x_i^j
\end{equation}
Then $g \colon (Q,P) \mapsto (g \cdot Q, g \cdot P)$ gives 
an action of $\wt{G}(T)$ on $\bbM$ preserving the symplectic structure.
Note that if $(V,W;S,T,Q,P) \in \calH$ is stable, 
then $(V,W;S,T,g \cdot Q, g \cdot P)$ is also stable for any $g \in \wt{G}(T)$.
Let $\bbM^{st}$ be the set of all $(Q,P)$
such that $(V,W;S,T,Q,P)$ is stable.

\begin{prp}[\cite{AHP88,AHH90,Yam11}]\label{prp:AHHP}
{\rm (i)} The map 
\[
\Phi_T \colon \bbM \to \wt{\frg}^*(T);
\quad (Q,P) \mapsto Q(x 1_W -T)^{-1}P\, dx
\]
is a moment map generating the $\wt{G}(T)$-action.

{\rm (ii)} The action of $G_T$ on $\bbM^{st}$ is free and proper.

{\rm (iii)} The map $\Phi_T$ is $G_T$-invariant and induces a Poisson embedding
\[
\bbM^{st}/G_T \hookrightarrow \wt{\frg}^*(T),
\]
which induces a symplectomorphism from 
the symplectic quotient of $\bbM^{st}$ by the $G_T$-action 
along each $G_T$-coadjoint orbit onto a $\wt{G}(T)$-coadjoint orbit. 
\end{prp}

The following lemma will be used later:

\begin{lmm}\label{lmm:stability}
Let $(Q,P) \in \bbM^{st}$.

{\rm (i)} If $C \in \frg_T$ satisfies
\[
Q(x 1_W -T)^{-1}CP=0,
\]
then $C=0$.

{\rm (ii)} If $Q' \in \Hom_\C(W,V)$ and $P' \in \Hom_\C(V,W)$ satisfy
\[
Q'(x 1_W -T)^{-1} P = Q (x 1_W -T)^{-1} P',
\]
then there exists a unique $C \in \frg_T$ such that $Q'=QC$, $P'=CP$.
\end{lmm}

\begin{proof}
(i) As $C$ commutes with $T$, it has the form 
$C=\bigoplus_i C_i$ with $C_i \in \End_\C(W_i)$ and
\[
\wt{Q_i C_i} = \sum_{j \geq 1} Q_i C_i N_i^{j-1} x_i^{k_i-j} 
= \sum_{j \geq 1} Q_i N_i^{j-1} x_i^{k_i-j} C_i
= \wt{Q}_i C_i.
\]
The assumption implies $\wt{Q}_i C_i \wt{P}_i =0$. 
Since $\wt{Q}_i$ and $\wt{P}_i$ are injective and surjective, respectively,
we obtain $C_i =0$.

(ii) The assumption tells us that
$(Q',-P') \in \bbM(V,W) \simeq T_{(Q,P)}\bbM(V,W)$ 
is contained in $\Ker (d \Phi_T)_{(Q,P)}$. 
\prpref{prp:AHHP} implies 
\[
\Ker (d \Phi_T)_{(Q,P)} = T_{(Q,P)} \left( G_T \cdot (Q,P) \right),
\] 
which shows the assertion.
\end{proof}

\subsection{Stable objects and minimal extensions}\label{subsec:minimal}

The following proposition shows that a stable object gives 
the minimal extension under some assumption:

\begin{prp}\label{prp:minimal}
Let $(V,W;\gamma)$ be a stable object of $\calH$.
Label the eigenvalues of $T$ as $t_i$, $i=1,2, \dots , m$.
For each $i$, let $W_i \subset W$ be the generalized $t_i$-eigenspace for $T$
and 
\begin{itemize}
\item $N_i \in \End_\C(W_i)$ the nilpotent part of $T |_{W_i}$,
\item $Q_i \in \Hom_\C(W_i,V)$ the restriction of $Q$ to $W_i$,  
\item $P_i \in \Hom_\C(V,W_i)$ the projection of $P$ to $W_i$.
\end{itemize}
Assume that for each $i$ and $k \in \Z$, the map from $\Ker N_i$ to $\Coker N_i$ 
induced from $(P_i Q_i + k 1_{W_i}) |_{\Ker N_i}$ is an isomorphism.
Then the $\calD_{\bbA^1}$-module $\calV_\gamma = \wt{\Phi}(V,W;\gamma)$ satisfies 
$\calV_\gamma \simeq j_{!*}j^*\calV_\gamma$,
where $j$ is the inclusion map of 
$U := \bbA^1 \setminus \{ t_1, t_2, \dots ,t_m \}$ into $\bbA^1$.
\end{prp}

\begin{proof}
It is well-known (see e.g.~\cite[Lemma~2.9.1]{Kat90}) 
that there is an isomorphism $\calV_\gamma \simeq j_{!*}j^*\calV_\gamma$
which is an identity on $U$ if and only if
\[
\Hom_{\calD_{\bbA^1}}(\delta_{t_i},\calV_\gamma) = 0,
\quad
\Hom_{\calD_{\bbA^1}}(\calV_\gamma,\delta_{t_i}) = 0
\quad (i=1,2, \dots , m),
\] 
where $\delta_{t_i} := \calD_{\bbA^1}/\calD_{\bbA^1}(x-t_i)$.
Assume that there is a nonzero homomorphism 
$\delta_{t_i} \to \calV_\gamma$ for some $i$.
Taking the Fourier transform and restricting to 
the open subset $U'$ equal to $\bbA^1$ minus the spectra of $S$, 
we then obtain a nonzero homomorphism of connections
\[
(\calO_{U'},d+t_i\,dy) \to 
\left( \calO_{U'} \otimes_\C W, d + \left( T + P(y 1_V -S)^{-1}Q \right) dy \right).
\]
In particular, we see that there is a 
nonzero $W$-valued holomorphic function $w$ on $U'$
such that
\[
\partial_y w + \left( T + P(y 1_V -S)^{-1}Q \right) w = t_i w.
\]
Let $w(y)= \sum_{l=0}^\infty w_l y^{k-l}$, $w_0 \neq 0$
be the Laurent expansion of $w$ at $y=\infty$.
Using the expansion 
\[
P(y 1_V -S)^{-1}Q = \sum_{l \geq 0} PS^l Q y^{-l-1},
\]
we obtain
\[
(T - t_i\,1_W) w_0 =0, \quad (T - t_i\,1_W) w_1 + (PQ + k 1_W) w_0 = 0.
\]
Hence
\[
w_0 \in \Ker N_i \subset W_i, \quad  (P_i Q_i + k 1_{W_i})(w_0) \in \range N_i,
\] 
which contradicts the assumption.
Hence $\Hom_{\calD_{\bbA^1}}(\delta_{t_i},\calV_\gamma)=0$ for all $i$.
The dual argument also shows that 
$\Hom_{\calD_{\bbA^1}}(\calV_\gamma,\delta_{t_i})=0$ for all $i$. 
\end{proof}

\begin{rmk}
In the above proof the stability property of $(W,\gamma)$ is not used.
However, if $(W,\gamma)$ is not stable, then the map $\Ker N_i \to \Coker N_i$ 
induced from $P_i Q_i |_{\Ker N_i}$ is not an isomorphism for some $i$. 
Indeed, assume there is a nonzero subspace $W' \subset W$ such that
$\gamma(0 \oplus W') \subset 0 \oplus W'$.
Then $W'_i := W' \cap W_i \neq 0$ for some $i$ and it satisfies 
$N_i(W'_i) \subset W'_i$ and $Q_i(W'_i)=0$.
Since $N_i$ is nilpotent, $\Ker N_i \cap W'_i \neq 0$. 
Hence $P_i Q_i |_{\Ker N_i}$ is not injective.
Similarly, if there is a proper subspace $W' \subset W$ such that
$\gamma(V \oplus W') \subset V \oplus W'$, then 
the projection of $P_i Q_i$ to $\Coker N_i$ is not surjective.
\end{rmk}

\subsection{Normal forms}\label{subsec:normal}

Now we will give a local condition for $(V,A) \in \calS$ which is sufficient for 
that $(p \circ \HD)(V,A)$ is isomorphic to the Fourier-Laplace transform of $p(V,A)$.

For $t \in \bbP^1$, denote by $\calO_t$ the formal completion of 
the ring of germs at $t$ of holomorphic functions 
and by $\calK_t$ its field of fractions. 
Fix a maximal torus $\frt$ of $\gl(V)$.

\begin{dfn}\label{dfn:normal}
(i) For $t \in \bbP^1$, an element of $\frt(\calK_t)/\frt(\calO_t)$ 
is called an (unramified) {\em irregular type} at $t$.

(ii) Let $\Lambda$ be an irregular type at $t \in \bbP^1$.
Take a local coordinate $z$ vanishing at $t$ 
and regard $\Lambda$ as an element of 
$z^{-1}\frt[z^{-1}] \simeq \frt(\calK_t)/\frt(\calO_t)$.
Then for $L \in \gl(V)$ with $L \Lambda = \Lambda L$, 
the connection 
\[
d - d \Lambda - L\,dz/z
\] 
is called a {\em normal form} with irregular type $\Lambda$.
\end{dfn}

It is useful to calculate $\kappa_z(V,A^0)$ 
for a normal form $d-A^0$, $A^0 = d\Lambda + L\,dz/z$ at $t \in \bbP^1$.
Take a basis of $V$ so that $\frt$ is identified with the standard maximal torus,
and label the nonzero diagonal entries of $\Lambda$ as 
$\lambda_1, \lambda_2, \dots , \lambda_d$.
Set $\lambda_0 \equiv 0$ for convenience.
For $a=0,1, \dots , d$, set
\[
V_a = \Ker (\Lambda - \lambda_a 1_V \colon V \to V \otimes_\C \calK_t/\calO_t).
\]
Then we have direct sum decompositions
\[
V = \bigoplus_{a=0}^d V_a, \quad
\Lambda = \bigoplus_{a=0}^d \lambda_a\,1_{V_a},
\quad
L = \bigoplus_{a=0}^d L_a,
\]
where $L_a \in \End_\C(V_a)\ (a=0,1, \dots , d)$.
Thus we have a natural isomorphism
\[
\kappa_z(V,d\Lambda + L\,dz/z) \simeq 
\bigoplus_{a=0}^d \kappa_z(V_a, d\lambda_a + L_a\,dz/z)
\]
by \prpref{prp:sum}. For each $a \neq 0$, let us calculate 
\[
(V_a,W_a;0,N_a,X_a,Y_a) := \kappa_z(V_a, d\lambda_a + L_a\,dz/z).
\]
For $a \neq 0$, let $k_a$ be the pole order of $d\lambda_a$.
By definition, we have
\[
W_a = V_a \otimes_\C \left( \C[z]/(z^{k_a}) \right) / \Ker (d\lambda_a + L_a\,dz/z)^{\wedge}.
\]
Write $\partial_z \lambda_a = \sum_{j=2}^{k_a} \lambda_{a,j} z^{-j}$.
Since $\lambda_{a,k_a}$ is a nonzero scalar, 
$(d\lambda_a + L_a\,dz/z)^{\wedge}$ is invertible 
and hence $W_a = V_a \otimes_\C \C[z]/(z^{k_a})$.
By the definition, $N_a \colon W_a \to W_a$ is the multiplication by $z$, 
$Y_a \colon V_a \to W_a$ is the inclusion map,
and $X_a \colon W_a \to V_a$ is given by 
\[
\wt{X}_a = (z^{k_a} \partial_z \lambda_a) 1_{W_a} + L_a \otimes z^{k_a-1} 
\in \End_{\C[z]/(z^{k_a})}(W_a),
\]
i.e., for $v \in V_a$ and $l=0,1, \dots , k_a-1$,
\[
X_a (v \otimes z^l) = 
\begin{cases}
\lambda_{a,l+1} v & (l>0) \\
L_a v & (l=0).
\end{cases}
\]
Under the identification $W_a = V_a^{\oplus k_a}$ 
induced from the basis $z^{k_a-1}, z^{k_a -2}, \dots , 1$ 
of $\C[z]/(z^{k_a})$, 
the linear maps $X_a, Y_a, N_a$ are thus respectively expressed as
\begin{equation}\label{eq:matrix}
\begin{aligned}
X_a &= 
\begin{pmatrix} 
\lambda_{a,k_a} & \lambda_{a,k_a-1} & 
\cdots & \lambda_{a,2} & L_a
\end{pmatrix}, \\ 
Y_a &= 
\begin{pmatrix} 
0 \\ \vdots \\ 0 \\ 1_{V_a}
\end{pmatrix}, \quad
N_a = 
\begin{pmatrix}
0     & 1_{V_a}   &        & 0             \\
      & 0               & \ddots &               \\
      &                 & \ddots & 1_{V_a} \\
0     &                 &        & 0 
\end{pmatrix}.
\end{aligned}
\end{equation}
On the other hand, for $a=0$, 
the space $W_0$ is given by the quotient $V_0/\Ker L_0$
and $N_0=0$.
The map $Y_0 \colon V_0 \to W_0$ is the projection and 
$X_0 \colon W_0 \to V_0$ is the map induced from $L_0$.

Based on the above observation, we show the following theorem:

\begin{thm}\label{thm:minimal}
Let $(V,A) \in \calS$ and label its poles in $\bbA^1$ as 
$t_i$, $i=1,2, \dots , m$.
Assume that for any $i$, 
there exists $\wh{g}_i \in \Aut_{\C\fp{x_i}}(V \otimes_\C \C\fp{x_i})$ 
and a normal form $d-d\Lambda_i-L_i\,dx_i/x_i$ at $x=t_i$ such that 
\[
\wh{g}_i ^{-1} \circ (d - A) \circ \wh{g}_i = d-d\Lambda_i-L_i\,dx_i/x_i + F(x_i)\,dx_i
\]
for some $F \in \End_{\C\fp{x_i}}(V \otimes_\C \C\fp{x_i})$.
Assume further that for each $i$, the restriction $L^{(i)}_0$ of $L_i$ to 
the subspace 
\[
\Ker (\Lambda_i \colon V \to V \otimes_\C \calK_t/\calO_t) \subset V
\]
satisfies 
\[
\Ker \left( L^{(i)}_0 \left( L^{(i)}_0 + k \right) \right) = \Ker L^{(i)}_0 
\quad (k \in \Z).
\]
Then $(\wt{\Phi} \circ \kappa)(V,A) \simeq (e \circ p)(V,A)$.
\end{thm}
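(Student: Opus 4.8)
The goal is to show $\calV_\gamma := (\wt\Phi \circ \kappa)(V,A)$ agrees with the minimal extension $j_{!*}j^*\calV_\gamma = (e \circ p)(V,A)$, where $j$ is the inclusion of $U := \bbA^1 \setminus \{t_1,\dots,t_m\}$. Since $\kappa(V,A)$ is stable by \prpref{prp:canonical} and $\calV_\gamma|_U$ is canonically $p(V,A)$ restricted to $U$, the plan is to verify the hypothesis of \prpref{prp:minimal}, namely that for each pole $t_i$ and each $k \in \Z$, the map $\Ker N_i \to \Coker N_i$ induced from $(P_i Q_i + k\,1_{W_i})|_{\Ker N_i}$ is an isomorphism. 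Everything then follows from that proposition.

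The first step is a reduction: the hypotheses of the theorem are stated at each pole separately and are formal-analytic in nature, so I would fix a pole $t=t_i$ and work locally. The point is that the blocks $(N_i, Q_i, P_i)$ of $\kappa(V,A)$ at $t_i$ depend only on the principal part of the Laurent expansion of $A$ at $t_i$, hence only on the formal connection germ $(d-A)_{t_i}$. Using the functoriality and the direct-sum compatibility \prpref{prp:sum} of $\kappa$ (recorded in \secref{subsec:normal}), the relevant data for $\kappa_{x_i}$ applied to a normal form $d - d\Lambda_i - L_i\,dx_i/x_i$ is the explicit one computed there: $W$ decomposes as $\bigoplus_{a=0}^d W_a$, with $N$ acting blockwise as the shift nilpotent $N_a$ on each $a \neq 0$ block and as $0$ on the $a=0$ block $W_0 = V_0/\Ker L_0$. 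I would first argue that applying a formal gauge transformation $\wh g_i$ and adding the holomorphic term $F(x_i)\,dx_i$ does not change whether the map in question is an isomorphism — gauge transformations act on $\kappa$ by an isomorphism in $\calH$ (so conjugate $(N_i,Q_i,P_i)$ without affecting the isomorphism property), and the regular term contributes only to higher-order data irrelevant at the level of $\Ker N_i \to \Coker N_i$. This reduces the whole problem to checking the criterion for the explicit normal-form data.

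The second step is the block-by-block computation. Because $N_i$ and $P_iQ_i$ are block-diagonal with respect to $W = \bigoplus_a W_a$, it suffices to check the criterion on each summand. On a block with $a \neq 0$, $N_a$ is the standard shift nilpotent on $V_a^{\oplus k_a}$, so $\Ker N_a$ and $\Coker N_a$ are each a single copy of $V_a$; from the matrix forms of $X_a,Y_a$ in \secref{subsec:normal}, the composite $P_aQ_a$ restricted to $\Ker N_a$ and projected to $\Coker N_a$ is (up to sign/scaling by the nonzero leading coefficient $\lambda_{a,k_a}$) the identity on $V_a$, so $P_aQ_a + k$ induces an invertible map for all $k$ — in fact $P_aQ_a$ alone already does. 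On the $a=0$ block, $N_0 = 0$, so $\Ker N_0 = \Coker N_0 = W_0 = V_0/\Ker L_0$, and the induced map is multiplication by (the endomorphism induced by) $Y_0 X_0 + k$, which one identifies with the action of $L^{(i)}_0 + k$ on $V_0/\Ker L^{(i)}_0$; the hypothesis $\Ker(L^{(i)}_0(L^{(i)}_0 + k)) = \Ker L^{(i)}_0$ says precisely that $L^{(i)}_0 + k$ restricts to an automorphism of a complement to $\Ker L^{(i)}_0$, hence induces an automorphism of $V_0/\Ker L^{(i)}_0$. Assembling the blocks gives the hypothesis of \prpref{prp:minimal}, and the conclusion follows.

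The main obstacle is the first step — making precise that the formal gauge equivalence $\wh g_i$ together with the extra holomorphic term does not affect the criterion. One has to verify that the blocks $(N_i, Q_i, P_i)$ produced by $\kappa$ depend only on the principal part, and that a formal-analytic gauge transformation $\wh g_i \in \Aut_{\C\fp{x_i}}(V \otimes \C\fp{x_i})$ induces, via the $\wt G(T)$-action of \secref{subsec:properties} (equations \eqref{eq:action-Q}, \eqref{eq:action-P}), a change of $(Q_i,P_i)$ by an invertible factor $g_i \in \Aut_{\calR_i}(V \otimes_\C \calR_i)$ — this is where the truncation $\calR_i = \C[x_i]/(x_i^{k_i})$ and the compatibility between the gauge action and $\kappa$ must be spelled out carefully, and where one must check that changing $A$ within its $\GL_n(\C\fp{x_i})$-gauge class by a holomorphic one-form changes only the quotients defining $W_i$ in a way that leaves $\Ker N_i \to \Coker N_i$ and the induced $P_iQ_i$ unchanged up to the invertible conjugation. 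Once that bookkeeping is in place, the rest is the routine block computation above.
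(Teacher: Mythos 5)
Your overall route is the same as the paper's: reduce everything to the hypothesis of \prpref{prp:minimal}, compute the blocks of $\kappa_{x_i}$ of a normal form explicitly (for $a\neq 0$ the induced map $\Ker N^{(i)}_a\to\Coker N^{(i)}_a$ is the invertible leading coefficient of $\partial_{x_i}\lambda^{(i)}_a$; for $a=0$ invertibility of the induced map of $Y^{(i)}_0X^{(i)}_0+k$ is exactly the hypothesis on $L^{(i)}_0$), and then transport this through the formal gauge transformation via the truncated element of $\wt{G}(T)$. That part of your plan matches the paper, and the block computation is correct.

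The gap is precisely the step you defer as ``bookkeeping''. Your stated justification in the body — that gauge transformations ``act on $\kappa$ by an isomorphism in $\calH$'' and hence ``conjugate $(N_i,Q_i,P_i)$'' — is false: if $\kappa(V,A)$ were $\calH$-isomorphic to $\kappa$ of the glued normal forms, then applying $\Phi$ would make $A$ globally constant-gauge-equivalent to the normal forms, which is not the case; and the $\wt{G}(T)$-action \eqref{eq:action-Q}, \eqref{eq:action-P} is not conjugation, so $P_iQ_i$ is \emph{not} transformed by conjugation (nor is the ``$+k\,1_{W_i}$'' term controlled by a conjugation applied only to $P_iQ_i$). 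The paper closes this in two moves you should adopt. First, it never recomputes $\kappa(V,A)$ or its spaces $W_i$ from $A$: it builds the object $(V,W;S,T,Q,P)$ directly from the normal-form data $(T,X,Y)$ and $(Q,P)=g\cdot(X,Y)$, where $g=(g_i)$ is the truncation of $(\wh{g}_i)$ in $\wt{G}(T)$; equivariance of $\Phi_T$ (the holomorphic terms $F\,dx_i$ and $d\wh{g}_i\cdot\wh{g}_i^{-1}$ do not touch principal parts, and $S=\lim_{x\to\infty}A(x)$ handles the constant term) gives $\Phi_x(V,W;\gamma)=(V,A)$, and stability plus \prpref{prp:unique} identifies it with $\kappa(V,A)$, so your worry about ``the quotients defining $W_i$'' never arises. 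Second, the invariance you need is an equality, not just invariance up to conjugation: restricting \eqref{eq:action-Q} to $\Ker N_i$ kills all terms with $N_i^j$, $j\geq 1$, and projecting \eqref{eq:action-P} to $\Coker N_i$ kills all terms with $N_i^j$, $j\geq 1$, so
\[
\pi_i P_iQ_i|_{\Ker N_i}=\pi_i Y_i\,\bar{g}^{(i)}_0 g^{(i)}_0\,X_i|_{\Ker N_i}=\pi_i Y_iX_i|_{\Ker N_i},
\]
since $\bar{g}^{(i)}_0 g^{(i)}_0=1$. It is this literal equality (with $N_i$, hence the map $\Ker N_i\to\Coker N_i$ induced by $k\,1_{W_i}$, unchanged) that lets the ``$+k$'' criterion transfer; a statement only ``up to invertible conjugation'' of $P_iQ_i$ alone would not suffice. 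With that computation supplied, your argument is complete and coincides with the paper's proof.
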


\begin{proof}
Fix $i$ for the moment. As above we label the nonzero diagonal entries of $\Lambda_i$ 
as $\lambda^{(i)}_a$, $a=1,2, \dots , d_i$
and set $\lambda^{(i)}_0 \equiv 0$.
We then have the associated direct sum decompositions 
\[
V = \bigoplus_{a=0}^{d_i} V^{(i)}_a,
\quad
\Lambda_i = \bigoplus_{a=0}^{d_i} \lambda^{(i)}_a\,1_{V^{(i)}_a},
\quad
L_i = \bigoplus_{a=0}^{d_i} L^{(i)}_a.
\]
For $a=0,1, \dots , d_i$, let 
\[
(V^{(i)}_a,W^{(i)}_a;0,N^{(i)}_a, X^{(i)}_a, Y^{(i)}_a)
= \kappa_{x_i}(V^{(i)}_a, d\lambda^{(i)}_a + L^{(i)}_a\,dx_i/x_i), 
\]
and also
\[
(V,W_i;0,N_i,X_i,Y_i) 
= \bigoplus_{a=0}^{d_i} (V^{(i)}_a,W^{(i)}_a;0,N^{(i)}_a, X^{(i)}_a, Y^{(i)}_a).
\]
Then \prpref{prp:sum} implies that 
$(V,W_i;0,N_i,X_i,Y_i)$ is stable and 
\[
X_i (x_i\,1_{W_i} - N_i)^{-1} Y_i\,dx_i 
= d\Lambda_i + L_i\,dx_i/x_i.
\]
By \eqref{eq:matrix}, for $a \neq 0$
we have isomorphisms 
\[
\Ker N^{(i)}_a \simeq V^{(i)}_a \simeq \Coker N^{(i)}_a
\]
in terms of which, for any $k \in \Z$, the composite 
\[
\Ker N^{(i)}_a \xrightarrow{\text{inclusion}} W^{(i)}_a 
\xrightarrow{Y^{(i)}_a X^{(i)}_a + k}
W^{(i)}_a \xrightarrow{\text{projection}} \Coker N^{(i)}_a
\]
is expressed as the most singular coefficient of 
$\partial_{x_i} \lambda^{(i)}_a$ and hence is invertible. 
For $a=0$, we have $W^{(i)}_0 = V^{(i)}_0/\Ker L^{(i)}_0$ and 
$Y^{(i)}_0 X^{(i)}_0 \colon W^{(i)}_0 \to W^{(i)}_0$ is the map induced from $L^{(i)}_0$.
Hence the composite
\[
\Ker N^{(i)}_0 \xrightarrow{\text{inclusion}} W^{(i)}_0 
\xrightarrow{Y^{(i)}_0 X^{(i)}_0 + k}
W^{(i)}_0 \xrightarrow{\text{projection}} \Coker N^{(i)}_0
\]
is invertible for any $k \in \Z$ if and only if 
\[
\Ker \left( L^{(i)}_0 \left( L^{(i)}_0 + k 1_{V^{(i)}_0} \right) \right) = \Ker L^{(i)}_0
\]
for any $k \in \Z$, which follows from the assumption.
Taking the direct sum, we thus see that 
the projection of $\left( Y_i X_i + k 1_{W_i} \right) |_{\Ker N_i}$
onto $\Coker N_i$ is invertible for any $k \in \Z$.

Now set $W=\bigoplus_{i=1}^m W_i$ and  
\begin{equation}\label{eq:TXY}
\begin{aligned}
T &= \bigoplus_{i=1}^m (t_i\,1_{W_i} + N_i) \in \End_\C(W), \\
X &= 
\begin{pmatrix}
X_1 & \cdots & X_m
\end{pmatrix}
\in \Hom_\C(W,V),
\\
Y &= 
\begin{pmatrix}
Y_1 \\ \vdots \\ Y_m
\end{pmatrix}
\in \Hom_\C(V,W).
\end{aligned}
\end{equation}
Let $g=(g_i) \in \wt{G}(T)$ be the element induced from $(\wh{g}_i)$, 
and write 
\[
g_i = \sum_{j \geq 0} g^{(i)}_j x_i^j, \quad  
g_i^{-1} = \sum_{j \geq 0} \bar{g}^{(i)}_j x_i^j.
\]
Define $S=\lim_{x \to \infty} A(x)$ and $(Q,P) = g \cdot (X,Y) \in \bbM(V,W)$.
Then $(V,W;\gamma):=(V,W;S,T,Q,P) \in \calH$ is stable and satisfies 
$\Phi_x(V,W;\gamma) = A$ because $\Phi_T$ is $\wt{G}(T)$-equivariant.
Furthermore, for each $i$, the blocks $Q_i, P_i$ satisfy
\[
Q_i = \sum_{j \geq 0} g^{(i)}_j X_i N_i^j, 
\quad  
P_i = \sum_{j \geq 0} N_i^j Y_i \bar{g}^{(i)}_j,
\]
and hence if we denote by $\pi_i \colon W_i \to \Coker N_i$ the projection, then
\[
\pi_i P_i Q_i |_{\Ker N_i} = 
\pi_i Y_i \bar{g}^{(i)}_0 g^{(i)}_0 X_i |_{\Ker N_i}
= \pi_i Y_i X_i |_{\Ker N_i}.
\]
Hence $\pi_i (P_i Q_i + k 1_{W_i}) |_{\Ker N_i}$ is invertible for any $k \in \Z$
and the result follows from \prpref{prp:minimal}.
\end{proof}

\begin{rmk}\label{rmk:mc}
For $\alpha \in \C$,
define a functor $\add_\alpha \colon \calS \to \calS$ by
\[
(W,B) \mapsto (W,B+y^{-1}\alpha\,dy\,1_V); \quad \varphi \mapsto \varphi.
\]
The functor 
$\mc_\alpha := \ov{\HD} \circ \add_{-\alpha} \circ \HD \colon \calS \to \calS$
introduced in \cite{DR00,Tak,Yam11}
is an additive analogue of the {\em middle convolution} 
appearing in an algorithm of Katz~\cite{Kat} and Arinkin~\cite{Ari} 
to construct all rigid meromorphic connections from the trivial rank one connection.
\end{rmk}

\section{Isomonodromic deformations}\label{sec:IMD}

Throughout this section,
we fix a nonzero finite-dimensional $\C$-vector space $V$ 
and a maximal torus $\frt \subset \gl(V)$. 
Take a basis of $V$ so that $\frt$ is identified with the standard maximal torus.

Let $\Delta$ be a contractible complex manifold (e.g.\ a polydisc).
Let $t_i \colon \Delta \to \bbP^1 \times \Delta,\ i=0,1, \dots ,m$
be holomorphic sections of the fiber bundle
$\pi \colon \bbP^1 \times \Delta \to \Delta$
such that $t_i(s) \neq t_j(s)\ (i \neq j)$ 
in each fiber $\bbP^1_s := \bbP^1 \times \{ t \}$.
In this section 
we examine the isomonodromy problem 
for families $(\nabla_s)_{s \in \Delta}$ of meromorphic connections 
on the trivial vector bundles $\calO_{\bbP^1_s} \otimes_\C V$ over $\bbP^1_s$
with poles at $t_i(s)$, $i=0,1, \dots , m$
and for the families on the Harnad dual side.

In what follows, we use the notation 
$g[A] = g A g^{-1} + dg \cdot g^{-1}$
to denote the gauge transforms.

\subsection{Isomonodromic deformations}\label{subsec:IMD}

We fix a smoothly varying standard coordinate 
$x \colon \bbP^1_s \xrightarrow{\simeq} \C \cup \{ \infty \}$
in which $t_0(s) \equiv \infty$
and re-trivialize the bundle $\bbP^1 \times \Delta$ so that $d_\Delta x =0$ for simplicity.
For $i=0,1, \dots ,m$, we put
\[
x_i \colon \bbP^1 \times \Delta \to \bbP^1; \quad (x,t) \mapsto 
\begin{cases}
1/x & (i=0), \\
x-t_i(s) & (i \neq 0),
\end{cases}
\]
which gives a coordinate on each $\bbP^1_s$ vanishing at $t_i(s)$.
For $i=0,1, \dots , m$, let $\Lambda_i$ be a smoothly varying family of irregular types 
\[
\Lambda_i(s) \in 
\frt(\calK_{t_i(s)})/\frt(\calO_{t_i(s)}) \simeq 
x_i^{-1}\frt[x_i^{-1}],
\quad s \in \Delta,
\]
such that the pole order of the difference 
of every two diagonal entries of $\Lambda_i(s)$ is constant on $\Delta$.
In particular, the reductive subgroup
\[
H_i := \{\, g \in G \mid g\Lambda_i(s)g^{-1} = \Lambda_i(s)\,\}
\]
does not depend on $s$. Let $\frh_i$ be its Lie algebra.

For $i=0,1, \dots ,m$, 
let $L_i \colon \Delta \to \frh_i$
be a holomorphic map such that 
\begin{enumerate}
\item[(E1)] for any $s \in \Delta$, $L_i(s) \in \frh_i$ is {\em non-resonant},
i.e., $\ad_{L_i(s)} \in \End \frh_i$ has no nonzero integral eigenvalues;
\item[(E2)] the $H_i$-adjoint orbit of $L_i(s)$ does not depend on $s$.
\end{enumerate}
In particular, for each $s \in \Delta$ and $i=0,1, \dots , m$, 
the connection $d_{\bbP^1} - d_{\bbP^1} \Lambda_i - L_i\,d_{\bbP^1}x_i/x_i$ 
is a normal form at $t_i(s) \in \bbP^1_s$.
We call the pair $(\bm{\Lambda},\bfL)$, where 
$\bm{\Lambda}:=(\Lambda_i)_{i=0}^m$, $\bfL := (L_i)_{i=0}^m$,  
an {\em admissible family of singularity data}.

To an admissible family of singularity data $(\bm{\Lambda},\bfL)$, 
we associate meromorphic connections
\[
\nabla^0_i = d_{\bbP^1 \times \Delta} - \calA^0_i, 
\quad
\calA^0_i := d_{\bbP^1 \times \Delta} \Lambda_i 
+ L_i \frac{d_{\bbP^1 \times \Delta}x_i}{x_i}
\]
on the trivial vector bundle $\calO_{\bbP^1 \times \Delta} \otimes_\C V$
over $\bbP^1 \times \Delta$.

\begin{dfn}\label{dfn:admissible}
The family $(\nabla_s)_{s \in \Delta}$, $\nabla_s = d_{\bbP^1_s} - A(s)$ 
of meromorphic connections on $\calO_{\bbP^1_s} \otimes_\C V$ 
is called an {\em admissible family with singularity data} 
$(\bm{\Lambda},\bfL)$ if it satisfies 
the following two conditions:
\begin{enumerate}
\item the meromorphic one-forms $A(s),\, s \in \Delta$ assemble into  
a meromorphic one-form on $\bbP^1 \times \Delta$ 
with poles on $\bigcup_{i=0}^m t_i(\Delta)$;
\item for any $i=0,1,\dots ,m$, 
there exists a family  
$\wh{g}_i \colon \Delta \to \Aut_{\C\fp{x_i}}(V \otimes_\C \C\fp{x_i})$ 
of formal power series with coefficients holomorphic on $\Delta$ such that 
for each $s \in \Delta$,
the Laurent expansion of $A(s)$ at $x_i=0$ coincides 
with the gauge transform of that of $\calA^0_i |_{\bbP^1_s}$ 
via $\wh{g}_i(s)$:
\[
A(s) = \wh{g}_i(s)[\calA^0_i |_{\bbP^1_s}]. 
\]
\end{enumerate}
\end{dfn}
It is well-known (see e.g.~\cite[Theorem~6.4]{BV}) 
that the coefficients of the power series $\wh{g}_i$ are
uniquely determined from its constant term,
whose ambiguity is 
exactly the right multiplication by map $h \colon \Delta \to H_i$ 
such that $h(s)$ commutes with $L_i(s)$ for any $s \in \Delta$.

\begin{dfn}\label{dfn:IMD}
Let $(\nabla_s)_{s \in \Delta}$ 
be an admissible family of meromorphic connections on $\calO_{\bbP^1_s} \otimes_\C V$ 
with singularity data $(\bm{\Lambda},\bfL)$.
It is said to be {\em isomonodromic} if 
there exists a {\em flat} meromorphic connection $\nabla$ 
on $\calO_{\bbP^1 \times \Delta} \otimes_\C V$ 
with poles on $\bigcup_{i=0}^m t_i(\Delta)$ 
such that $\nabla |_{\bbP^1_s} = \nabla_s$ for each $s \in \Delta$.
Such $\nabla$ is called a {\em flat extension} of $(\nabla_s)_{s \in \Delta}$.
\end{dfn}

\begin{rmk}\label{rmk:IMD}
(i) In the case where 
the most singular coefficient of each $\Lambda_i(s)$ is regular semisimple,
the above gives the isomonodromic deformations in the sense of Jimbo et al.~\cite{JMU}  

(ii) If $\Lambda_i \equiv 0$ for $i \geq 0$, $\Lambda_0$ has pole order at most $3$ 
and its most singular coefficient is constant on $\Delta$, 
then the above gives the simply-laced isomonodromy systems 
in the sense of Boalch~\cite{Boa12}.
\end{rmk}

In fact, a flat extension of an isomonodromic family 
is almost determined from the singularity data:

\begin{lmm}[cf.~{\cite[Lemma A.1]{Boa01}}]\label{lmm:flat-local}
Let $(\nabla_s)_{s \in \Delta}$ be an isomonodromic family 
of meromorphic connections on $\calO_{\bbP^1} \otimes_\C V$
with singularity data $(\bm{\Lambda},\bfL)$ 
and $\nabla = d_{\bbP^1 \times \Delta} - \calA$ a flat extension of it.
Then for each $i$ 
there exists a $\frh_i$-valued one-form $\phi_i$ on $\Delta$
satisfying the following three conditions:
\begin{enumerate}
\item $\wh{g}_i^{-1}[\calA] = \calA^0_i + \pi^* \phi_i$;
\item $d_\Delta - \phi_i$ is flat;
\item $d_\Delta L_i = [\phi_i,L_i]$.
\end{enumerate}
\end{lmm}

\begin{proof}
Fix $i$.
By the definition, the $\bbP^1$-component $A^0_i$ of $\calA^0_i$
is equal to that of $\wh{g}_i^{-1}[\calA]$. 
Put $B = \wh{g}_i^{-1}[\calA] - A^0_i$.
Then the flatness of $\nabla$ implies
\begin{equation}\label{eq:flat1}
d_{\bbP^1} B + d_\Delta A^0_i = [A^0_i, B].
\end{equation}
Take any subspace $\frh'_i \subset \frg$ complementary to $\frh_i$,
and let $B'$ be the $\frh'_i$-component of $B$.
Projecting both sides of the above equality to $\frh'_i$, we find
\[
d_{\bbP^1} B' = [A^0_i, B'].
\] 
\cite[Theorem 6.4]{BV} implies that $B'$ commutes with $\Lambda_i$; hence $B'=0$
and $B$ takes values in $\frh_i$.
\eqref{eq:flat1} reads 
\begin{equation}\label{eq:flat2}
d_{\bbP^1} B + d_\Delta A^0_i 
= [A^0_i, B] = [B,L_i] \wedge \frac{d_{\bbP^1} x_i}{x_i}.
\end{equation}
Write $B = \sum_l B_l x_i^l$, where $B_l$ are one-forms in the $\Delta$-direction.
Then the above reads
\[
d_\Delta A^0_i = [B,L_i] \wedge \frac{d_{\bbP^1} x_i}{x_i} - d_{\bbP^1} B
= \sum_l (l-\ad_{L_i})(B_l) x_i^{l-1} \wedge d_{\bbP^1}x_i.
\]
On the other hand, we have
\[
d_\Delta A^0_i = d_\Delta d_{\bbP^1}\Lambda_i 
- L_i \frac{d_\Delta x_i \wedge d_{\bbP^1}x_i}{x_i^2}
+ d_\Delta L_i \wedge \frac{d_{\bbP^1}x_i}{x_i}.
\]
Since it has no holomorphic term (as a Laurent series in $x_i$) 
and takes values in $\Ker \ad_{L_i}$,
we find $(l-\ad_{L_i})(B_l) = 0$ for $l>0$ and
\[
(l-\ad_{L_i}) \ad_{L_i} (B_l) = \ad_{L_i} (l-\ad_{L_i})(B_l) = 0
\]
for any $l$. Recall that $L_i$ is non-resonant, i.e.,  
the operator $(l-\ad_{L_i}) \in \End \frh_i$ is invertible unless $l=0$.
Hence $B_l=0$ for $l > 0$ and $\ad_{L_i} (B_l) =0$ for $l\neq 0$.
Taking the formal residue at $x_i=0$ on both sides of \eqref{eq:flat2},
we find
\[
d_\Delta L_i = [B_0,L_i].
\]
Define $\phi_i = B_0$.
Since $d_\Delta -B$ is flat, its constant term  
$d_\Delta - \phi_i$ is also flat.
To prove the rest assertion, let $\Omega^0_i$ be the $\Delta$-component of $\calA^0_i$:
\[
\Omega^0_i = d_\Delta \Lambda_i + L_i \frac{d_\Delta x_i}{x_i}.
\]
Then $\wh{g}_i^{-1}[\calA] - \calA^0_i = B - \Omega^0_i$.
We have
\begin{align*}
d_{\bbP^1} (B-\Omega^0_i) &= d_{\bbP^1} B - d_{\bbP^1} d_\Delta \Lambda_i 
+ L_i \frac{d_{\bbP^1} x_i \wedge d_\Delta x_i}{x_i^2}
\\
&= d_{\bbP^1} B + d_\Delta A^0_i - d_\Delta L_i \wedge \frac{d_{\bbP^1}x_i}{x_i}
\\
&= [B,L_i] \wedge \frac{d_{\bbP^1} x_i}{x_i} - [B_0,L_i] \wedge \frac{d_{\bbP^1}x_i}{x_i}
\\
&= [B-B_0,L_i] \wedge \frac{d_{\bbP^1} x_i}{x_i}.
\end{align*}
Since $\ad_{L_i}(B_l)=0$ for $l \neq 0$, we find $d_{\bbP^1} (B-\Omega^0_i)=0$.
Noting that $\Omega^0_i$ has no constant term in $x_i$,
we obtain $B-\Omega^0_i = \phi_i$.
\end{proof}

\begin{rmk}\label{rmk:exponent}
If the family $(\nabla_s)_{s \in \Delta}$ is isomonodromic,
then the above lemma and the contractibility of $\Delta$ show that 
for each $i$ we can find a holomorphic map $h_i \colon \Delta \to H_i$
such that $\phi_i = d_\Delta h_i \cdot h_i^{-1}$.
Then
\[
(\wh{g}_i h_i)^{-1}[\calA] = d_{\bbP^1 \times \Delta} \Lambda_i 
+ L'_i \frac{d_{\bbP^1 \times \Delta} x_i}{x_i},
\quad
L'_i = h_i^{-1} L_i h_i,
\] 
and $d_\Delta L'_i =0$, i.e., $L'_i$ is constant.
\end{rmk}

\begin{crl}\label{crl:Lax}
Let $(\nabla_s)_{s \in \Delta}$ be an isomonodromic family 
with singularity data $(\bm{\Lambda},\bfL)$ 
and $\nabla = d_{\bbP^1 \times \Delta} - \calA$ its flat extension.
Then the $\Delta$-component $\Omega$ of $\calA$ is expressed as 
\[
\Omega = \Omega' + 
\sum_{i=0}^m \left( \wh{g}_i \cdot \Omega^0_i \cdot \wh{g}_i^{-1} \right)_{i,-} 
\]
for some $\gl(V)$-valued one-form $\Omega'$ on $\Delta$,  
where $\Omega^0_i$ is the $\Delta$-component of $\calA^0_i$ 
and $( \phantom{X} )_{i,-}$  
means taking the principal part of the Laurent expansion in $x_i$.
\end{crl}

\begin{proof}
Taking the principal part of the $\Delta$-component 
on both side of the equality in 
\lmmref{lmm:flat-local}, (i), we obtain
\[
\left( \wh{g}_i^{-1} \cdot \Omega \cdot \wh{g}_i \right)_{i,-} = \Omega^0_i
\qquad (i=0,1, \dots , m).
\]
Since $\Omega$ is meromorphic, the result follows.
\end{proof}

\begin{rmk}\label{rmk:IMD2}
Let $(\nabla_s)_{s \in \Delta}$ be an isomonodromic family 
with singularity data $(\bm{\Lambda},\bfL)$ 
and $\nabla = d_{\bbP^1 \times \Delta} - \calA$ its flat extension.
According to the previous remark, 
we may assume that 
$\bfL$ is constant and $\wh{g}_i^{-1}[\calA] = \calA^0_i$
for all $i$.
Let $g \colon \Delta \to \GL(V)$ be the constant term of $\wh{g}_0$
and replace $\calA$, $\wh{g}_i$ with $g^{-1}[\calA]$, $g^{-1}\wh{g}_i$,
respectively. 
Then the relations $\wh{g}_i^{-1}[\calA] = \calA^0_i$ still hold, 
the constant term of $\wh{g}_0(s)$ is the identity, and 
$A(s) = \calA|_{\bbP^1_s}$ has the same most singular term at $x=\infty$
as $d_{\bbP^1}\Lambda_0(s)$.
In this situation one can modify the above proof of \crlref{crl:Lax} 
to obtain 
\[
\Omega = \left( \wh{g}_0 \cdot \Omega^0_0 \cdot \wh{g}_0^{-1} \right)_{0,\leq 0} + 
\sum_{i=1}^m \left( \wh{g}_i \cdot \Omega^0_i \cdot \wh{g}_i^{-1} \right)_{i,-},
\]
where $( \phantom{X} )_{i,\leq 0}$  
means taking the non-positive degree part of the Laurent expansion in $x_i$.
Note that it depends only on the family $(d_{\bbP^1_s}-A(s))_{s \in \Delta}$.
Indeed, $\wh{g}_0(s)$ is uniquely determined from $A(s)$ and 
the ambiguity of the choice of $\wh{g}_i$ for each $i \geq 1$
is only the right multiplication by map $h_i \colon \Delta \to H_i$ commuting with $L_i$,
while $h_i \Omega^0_i h_i^{-1} = \Omega^0_i$ for such $h_i$.
Hence the flatness condition for $\calA$ gives a system of non-linear differential equations 
for $A(s)$, the ``isomonodromy equation''.
\end{rmk}

\subsection{Main theorem}\label{subsec:main}

In what follows we fix 
an admissible family of singularity data $(\bm{\Lambda},\bfL)$.
As in \secref{subsec:normal}, 
we label the nonzero diagonal entries of $\Lambda_i$
as $\lambda^{(i)}_1, \lambda^{(i)}_2, \dots , \lambda^{(i)}_{d_i}$
and set $\lambda^{(i)}_0 \equiv 0$ for convenience.
For each $i$, we then have a decomposition 
$V = \bigoplus_{a=0}^{d_i} V^{(i)}_a$, where
$\Lambda_i |_{V^{(i)}_a} = \lambda^{(i)}_a\,1_{V^{(i)}_a}$. 
(If any diagonal entry of $\Lambda_i$ is nonzero then $V^{(i)}_0 = \{ 0 \}$.)
We assume the following three conditions:
\begin{enumerate}
\item[(A1)] $\Lambda_0=0$;
\item[(A2)] the pole order of $\lambda^{(i)}_a - \lambda^{(i)}_b$ 
is constant on $\Delta$ for each $i,a,b$;
\item[(A3)] each $L_i$ is constant on $\Delta$.
\end{enumerate}
The second condition is non-trivial unless 
$\Lambda_i$ has zero in its diagonal entries,
and is assumed so that the rank of the Fourier-Laplace transform is constant on $\Delta$.
The third condition is not essential (see \rmkref{rmk:exponent})
but we assume it to simplify the arguments. 

According to the decomposition $H_i = \prod_{a=0}^{d_i} \GL(V^{(i)}_a)$,
we express 
\[
L_i = \bigoplus_{a=0}^{d_i} L^{(i)}_a,
\quad L^{(i)}_a \in \gl(V^{(i)}_a).
\]
For each $i,a$, we put 
\[
A^0_{i,a}=d_{\bbP^1} \lambda^{(i)}_a\,1_{V^{(i)}_a} + L^{(i)}_a\,d_{\bbP^1} x_i /x_i
\]
and let
\[
(V^{(i)}_a, W^{(i)}_a; 0, N^{(i)}_a, X^{(i)}_a, Y^{(i)}_a) 
= \kappa_{x_i}(V^{(i)}_a, A^0_{i,a}) \in \calH.
\]
We have already calculated $W^{(i)}_a$; see \secref{subsec:normal}.
By assumption (A3), each $W^{(i)}_a$ does not depend on $s \in \Delta$. 
We set $W = \bigoplus_{i=1}^m \bigoplus_{a=0}^{d_i} W^{(i)}_a$ 
and state the main result of this paper.

\begin{thm}\label{thm:main}
Let $(\nabla_s)_{s \in \Delta}$, $\nabla_s = d_{\bbP^1} -A(s)$ be an admissible family 
of meromorphic connections on $\calO_{\bbP^1} \otimes_\C V$ with 
singularity data $(\bm{\Lambda},\bfL)$ satisfying assumptions~{\rm (A1--3)}
and $W$ the vector space defined above.
Then there exists a family 
$(\nabla_s^\vee)_{s \in \Delta}$, $\nabla_s^\vee = d_{\bbP^1} - B(s)$
of meromorphic connections on $\calO_{\bbP^1} \otimes_\C W$ satisfying 
the following three conditions:
\begin{enumerate}
\item $(W,B(s)) \simeq \HD(V,A(s))$ for any $s \in \Delta$ 
and $B(s)$ depends smoothly on $s \in \Delta$;
\item if $(\nabla_s)_{s \in \Delta}$ is isomonodromic, 
then there exist $\gl(W)$-valued one-forms 
$\Theta, \Xi$ on $\Delta$ such that the meromorphic connection
\[
\nabla^\vee = d_{\bbP^1 \times \Delta} - B - (\Theta y + \Xi)
\]
is flat, where $B$ is the meromorphic one-form on $\bbP^1 \times \Delta$ 
induced from $B(s)$, $s \in \Delta$ (so it has no $\Delta$-component
and $B |_{\bbP^1_s} = B(s)$ for all $s \in \Delta$);
\item conversely, if there exists a flat meromorphic connection
$\nabla^\vee$ of the above form, 
and furthermore if $(V,A(s)) \in \calS$ is irreducible for any $s \in \Delta$
or $L_0$ is invertible, then $(\nabla_s)_{s \in \Delta}$ is isomonodromic.
\end{enumerate}
\end{thm}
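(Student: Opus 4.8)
The plan is to exploit the categorical dictionary built in \secref{sec:FL}: the Harnad dual is $\HD = \Phi_y \circ \sigma \circ \kappa_x$, and at the level of $\calD$-modules it lifts to the Fourier transform $\frF$ via $\wt{\Phi}_y \circ \sigma = \frF \circ \wt{\Phi}_x$. First I would use assumptions (A1--3) together with \thmref{thm:minimal} to check that for each $s \in \Delta$ the stable object $\kappa_x(V,A(s))$ actually produces the minimal extension $(e\circ p)(V,A(s))$, and that the vector space $W$ appearing in the dual is exactly $\bigoplus_{i,a} W_a^{(i)}$ as defined — in particular, by (A2) the rank of $\calF$ is constant in $s$, so $W$ is genuinely a constant bundle. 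This gives a well-defined family $B(s)$ with $(W,B(s)) \simeq \HD(V,A(s))$; its smooth dependence on $s$ follows because $\kappa_x$ is built from Laurent coefficients of $A(s)$, which vary holomorphically by Definition~\ref{dfn:admissible}(i), and the gauge element $g = (g_i) \in \wt G(T)$ relating $\kappa_x(V,A(s))$ to the standard form $(X,Y)$ varies holomorphically by the uniqueness of formal normalizations (\cite[Theorem~6.4]{BV}).

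For (ii), suppose $(\nabla_s)$ is isomonodromic with flat extension $\nabla = d_{\bbP^1\times\Delta} - \calA$. By \rmkref{rmk:IMD2} I may normalize so that $\bfL$ is constant, $\wh g_i^{-1}[\calA] = \calA_i^0$, and the $\Delta$-component $\Omega$ of $\calA$ has the explicit form given there. The key step is to transport flatness through the Fourier transform. Concretely, I would build the larger connection $\calV_\gamma = \wt\Phi_x\kappa_x(V,A(s))$ on $\bbP^1\times\Delta$ — this is a flat extension on the $x$-side of the generalized-Okubo-type connection $d - (x-T)^{-1}R\,dx$ — and then apply $\frF$. Flatness of a connection in the variables $(x,s)$ with a pole only at finite points and at $x=\infty$ of order $\le 2$ is preserved under $x \mapsto (-\partial_y, \ldots)$; the relative (in $s$) flat structure on $\calV_\gamma$ yields a relative flat structure on $\frF(\calV_\gamma)$, whose restriction to the maximal locus where it is a bundle with connection is precisely $\nabla^\vee$. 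The point is to read off what the $\Delta$-component of the resulting connection is: since under Fourier transform $x$ becomes $\partial_y$, the degree-$\le 1$ dependence on $y$ of the one-form $B + (\text{$\Delta$-part})$ is forced, giving $\Theta y + \Xi$ with $\Theta,\Xi$ one-forms on $\Delta$. Here \prpref{prp:AHHP} and \lmmref{lmm:stability} are what let me identify the $\wt G(T)$-orbit data and hence pin down $B$ and the connection form canonically.

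For the converse (iii), I run the same machinery backwards using $\ov{\HD} = \Phi_x \circ \sigma^{-1} \circ \kappa_y$ and the inversion formula \thmref{thm:inversion}: given a flat $\nabla^\vee = d_{\bbP^1\times\Delta} - B - (\Theta y + \Xi)$, I apply the inverse Fourier transform to obtain a flat connection on $\bbP^1\times\Delta$ in the $x$-variable, and I must argue that its restriction to each $\bbP^1_s$ recovers $(V,A(s))$ and that it has no spurious poles, so that it is a genuine flat extension of $(\nabla_s)$. This is where the extra hypothesis — $(V,A(s))$ irreducible for all $s$, or $L_0$ invertible — enters: irreducibility guarantees $\ov{\HD}\circ\HD(V,A(s)) \simeq (V,A(s))$ by \thmref{thm:inversion} (one checks $(V,A(s))$ is not of the excluded form $(\C, c\,dx)$ since $\dim V$ and the singularity data are fixed), while $L_0$ invertible forces $S = \res_\infty A$ related data / $\Ker R$ to behave well so that the minimal-extension identification of \prpref{prp:minimal} and \thmref{thm:minimal} still holds and the quotient by $\Ker R$ returns the original connection. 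The main obstacle I anticipate is precisely this last point: controlling that the inverse-transformed connection on the $x$-side has poles only on $\bigcup t_i(\Delta)$ (no extra apparent singularity) and restricts fiberwise to $A(s)$, i.e., verifying that the reconstruction is not merely meromorphically flat but is the flat extension in the sense of \dfnref{dfn:IMD}. I would handle this by the minimal-extension characterization (\cite[Lemma~2.9.1]{Kat90}) applied relatively over $\Delta$, using assumption (A2) to keep the relevant $\Hom$-vanishing conditions open on all of $\Delta$, and the irreducibility (resp.\ invertibility of $L_0$) to rule out the failure modes described in the remark following \prpref{prp:minimal}.
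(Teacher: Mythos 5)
Your strategy -- transport flatness through a relative Fourier transform and back via \thmref{thm:inversion} -- is not what the paper does, and as sketched it has real gaps at exactly the points where the work lies. For (ii), the assertion that the degree-$\le 1$ dependence on $y$ of the $\Delta$-component ``is forced'' because $x$ becomes $\partial_y$ begs the question. The functor $\frF$ of \secref{sec:FL} acts on $\calD_{\bbA^1}$-modules through the minimal extension, so it neither sees the chosen trivialization nor the $\Delta$-directions; to get a flat connection literally of the form $d-B-(\Theta y+\Xi)$ on $\calO_{\bbP^1\times\Delta}\otimes_\C W$ one must show that the $\Delta$-component $\Omega$ of the flat extension on the $x$-side admits an \emph{extended AHHP representation} $\Omega=\Omega_\infty+Q(x 1_W-T)^{-1}\Theta P$ with the \emph{same} $Q,T,P$ and with $\Theta$ commuting with $T$. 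That is the content of \lmmref{lmm:flat-local} (which uses non-resonance of $L_i$ and \cite{BV} to pin down $\wh g_i^{-1}[\calA]$), \crlref{crl:Lax}, \lmmref{lmm:Theta} and \lmmref{lmm:AHHP}; and the existence of $\Xi$ with $[\Xi,T]=d_\Delta T+\Theta+[PQ,\Theta]$ is itself nontrivial -- one must prove this expression lies in $\range\ad_T$, which is \lmmref{lmm:adjoint} combined with the explicit form of $X_i,Y_i,\Theta_i$, and then correct $\Xi$ by a $\Ker\ad_T$-valued form using \lmmref{lmm:stability} so that $d_\Delta Q=\Omega_\infty Q-Q\Xi$, $d_\Delta P=-P\Omega_\infty+\Xi P$ hold. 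None of this is supplied by, or obviously extractable from, the formal Fourier dictionary; the paper proves (ii) by these explicit constructions and a direct verification of the two flatness equations.

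For (iii) there are two further problems. First, \thmref{thm:inversion} requires irreducibility, so your inverse-transform route does not cover the alternative hypothesis that only $L_0$ is invertible. Second, the genuine issue you yourself flag -- that the reconstructed $x$-side connection live on the trivial bundle, have poles only on $\bigcup_i t_i(\Delta)$, and restrict fiberwise to $A(s)$ -- is left to a vague appeal to a ``relative'' minimal-extension criterion, which is not set up anywhere and would need the same kind of explicit control you are trying to avoid. The paper instead argues directly: flatness of $\nabla^\vee$ forces the identities \eqref{eq:five} together with $d_\Delta(PQ)=[\Xi,PQ]$; either hypothesis guarantees $P$ injective and $Q$ surjective (note $-QP=\res_\infty A$ lies in the adjoint orbit of $L_0$), which yields a one-form $\Omega_\infty$ with $d_\Delta Q=\Omega_\infty Q-Q\Xi$, $d_\Delta P=-P\Omega_\infty+\Xi P$; one then \emph{defines} $\calA=A+\Omega_\infty+Q(x 1_W-T)^{-1}\Theta P$ and checks its flatness by the same computations as in (ii). If you want to salvage the Fourier-theoretic viewpoint, you would still have to reproduce these lemmas to identify the transform concretely, at which point the categorical detour adds nothing.
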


The rest of this section is devoted to prove the above theorem.

\subsection{Construction of the dual family}\label{subsec:dual}

First, we associate 
to an admissible family $(\nabla_s)_{s \in \Delta}$ of meromorphic connections 
the dual family $(\nabla^\vee_s)_{s \in \Delta}$ 
satisfying condition (i) in \thmref{thm:main}.

For each $i=1,2, \dots , m$, set $W_i = \bigoplus_{a=0}^{d_i} W^{(i)}_a$ 
and  
\[
N_i=\bigoplus_{a=0}^{d_i} N^{(i)}_a, \quad 
X_i = \bigoplus_{a=0}^{d_i} X^{(i)}_a, \quad 
Y_i = \bigoplus_{a=0}^{d_i} Y^{(i)}_a.
\]
Then 
\[
X_i (x_i\,1_{W_i} - N_i)^{-1} Y_i\, d_{\bbP^1}x_i 
= A^0_i = d_{\bbP^1} \Lambda_i + L_i\,d_{\bbP^1} x_i /x_i,
\]
and $(V,W_i;0,N_i,X_i,Y_i) \in \calH$ is stable at each $s \in \Delta$
(because it is isomorphic to $\kappa_{x_i}(V,A^0_i)$).

Let $(\nabla_s)_{s \in \Delta}$, 
$\nabla_s = d_{\bbP^1_s} - A(s)$ be an admissible family 
of meromorphic connections with singularity data $(\bm{\Lambda},\bfL)$
and $A$ the induced meromorphic one-form on $\bbP^1 \times \Delta$. 
Let $g=(g_i) \in \wt{G}(T)$ be the element 
induced from $(\wh{g}_i)$. 
Define $T, X, Y$ as in \eqref{eq:TXY} and set 
\begin{equation}\label{eq:QP}
(Q,P) = g \cdot (X,Y) \in \bbM(V,W).
\end{equation}
(Note that $T, Q, P$ depend on $s \in \Delta$.)
Then 
\[
Q (x 1_W - T)^{-1} P\, d_{\bbP^1} x = A,
\]
and $(V,W;0,T,Q,P) \in \calH$ is stable at each $s \in \Delta$.
Define
\[
\nabla^\vee_s = d_{\bbP^1} - B(s), \quad 
B(s) = - \left( T(s) + \frac{P(s)Q(s)}{y} \right) d_{\bbP^1}y \quad (s \in \Delta).
\]
Then $(W,B(s)) \simeq \HD(V,A(s))$ for all $s \in \Delta$.
We show that this family $(\nabla^\vee_s)_{s \in \Delta}$ 
satisfies conditions (ii), (iii) 
in \thmref{thm:main}.

\begin{rmk}\label{rmk:Wood}
If the most singular coefficient of each $\Lambda_i$ is invertible,
then our dual family $(\nabla_s^\vee)_{s \in \Delta}$ is 
isomorphic to Woodhouse's~\cite{Woo07}.
\end{rmk}

\subsection{Construction of the one-form $\Theta$}\label{subsec:Theta}

We construct a $\gl(W)$-valued one-form $\Theta$ on $\Delta$ appearing in 
\thmref{thm:main}, (ii);
in fact, it depends only on the singularity data $(\bm{\Lambda},\bfL)$.

\begin{lmm}\label{lmm:Theta}
For each $i$, there exists a unique 
$\Ker \ad_{N_i}$-valued one-form $\Theta_i$ on $\Delta$ such that
\[
\Omega^0_i = X_i (x_i\,1_{W_i} - N_i)^{-1} \Theta_i Y_i, \quad \Theta_i \wedge \Theta_i =0.
\]
\end{lmm}

\begin{proof}
For $i=1,2, \dots , m$ and $a=0,1, \dots ,d_i$, let 
$k_{i,a}$ be the pole order of $\partial_{x_i} \lambda^{(i)}_a$ 
and set $\calR_{i,a} = \C[x_i]/(x_i^{k_{i,a}})$.
We have to find for each $i$ 
a one-form $\Theta_i = \bigoplus_a \Theta^{(i)}_a$ with $\Theta^{(i)}_a$ 
taking values in $\End_{\calR_{i,a}}(W^{(i)}_a)$ such that
\begin{align*}
X^{(i)}_0 \cdot \Theta^{(i)}_0 \cdot Y^{(i)}_0 &= L^{(i)}_0 \frac{d_\Delta x_i}{x_i}, 
\\ 
\wt{X}^{(i)}_a \cdot \Theta^{(i)}_a \cdot \wt{Y}^{(i)}_a &= 
x_i^{k_{i,a}} d_\Delta \lambda^{(i)}_a \,1_{W^{(i)}_a} 
+ x_i^{k_{i,a}-1} L^{(i)}_a\,d_\Delta x_i \otimes 1_{\calR_{i,a}}
\quad (a \neq 0).
\end{align*}
The first equality is satisfied by $\Theta^{(i)}_0 := d_\Delta x_i\,1_{W^{(i)}_0}$.
Since $\wt{X}^{(i)}_a$ is invertible and $\wt{Y}^{(i)}_a$ is identity for $a \neq 0$,
the second equality is satisfied by 
\begin{equation}\label{eq:Theta}
\Theta^{(i)}_a := 
(\wt{X}^{(i)}_a)^{-1} \left( x_i^{k_{i,a}} d_\Delta \lambda^{(i)}_a \,1_{W^{(i)}_a} 
+ x_i^{k_{i,a}-1} L^{(i)}_a\,d_\Delta x_i \otimes 1_{\calR_{i,a}} \right).
\end{equation}
Note that 
$\wt{X}^{(i)}_a$ lies, and $\wt{X}^{(i)}_a \Theta^{(i)}_a \wt{Y}^{(i)}_a$ takes values, 
in $\calR_{i,a} \cdot 1_{W^{(i)}_a} + L^{(i)}_a \otimes (\calR_{i,a} \cdot 1_{\calR_{i,a}})$,
whose elements commute with one another.
Hence $\Theta_i \wedge \Theta_i =0$.
The uniqueness follows from \lmmref{lmm:stability}.
\end{proof}

\begin{lmm}\label{lmm:AHHP}
If $(\nabla_s)_{s \in \Delta}$ is isomonodromic 
with flat extension 
$\nabla = d_{\bbP^1 \times \Delta} - \calA$,
then $\Theta = \bigoplus_i \Theta_i$ satisfies
$Q (x 1_W - T)^{-1} \Theta P = \Omega - \Omega_\infty$,
where $\Omega$ is the $\Delta$-component of $\calA$ 
and $\Omega_\infty := \Omega |_{z=\infty}$.
\end{lmm}

\begin{proof}
Immediately follows from \lmmref{lmm:Theta} and \crlref{crl:Lax}.
\end{proof}

Hence the flat extension $\nabla = d_{\bbP^1 \times \Delta} - \calA$
is described as
\[
\calA = \Omega_\infty + Q(x 1_W -T)^{-1}(d_{\bbP^1}x + \Theta)P,
\]
which we call the {\em extended AHHP representation}.

\begin{rmk}\label{rmk:normalize}
The flatness condition of the above $\nabla$ 
implies that $d_\Delta - \Omega_\infty$ is flat;
hence there is a holomorphic map $g \colon \Delta \to \GL(V)$
such that 
\[
g[\calA] = g Q(x 1_W -T)^{-1}(d_{\bbP^1}x + \Theta)P g^{-1}.
\]
In other words, we can normalize the isomonodromic family with flat extension 
so that $\Omega_\infty=0$.
\end{rmk}

\begin{example}\label{ex:Schlesinger1}
Suppose $\bm{\Lambda}=0$.
As shown in the proof of \lmmref{lmm:Theta}, 
the one-form $\Theta$ is then given by 
\[
\Theta_i = d_\Delta x_i\,1_{W_i} = - d_\Delta t_i\,1_{W_i}
\quad (i=1,2, \dots , m),
\]
i.e., $\Theta = - d_\Delta T$.
Hence if $(\nabla_s)$ is isomonodromic with flat extension 
$\nabla = d_{\bbP^1 \times \Delta} - \calA$,
the AHHP representation is simply expressed as
\[
\calA = \Omega_\infty + Q(x 1_W -T)^{-1}(d_{\bbP^1}x - d_\Delta T)P 
= \Omega_\infty + Q\,d_{\bbP^1 \times \Delta}\log(x 1_W -T)P.
\]
\end{example}

\subsection{Existence of a one-form $\Xi$}\label{subsec:Xi}

Next we find a one-form $\Xi$ appearing in \thmref{thm:main}, (ii). 
We start with the following elementary lemma:

\begin{lmm}\label{lmm:adjoint}
Let $V$ be a finite-dimensional $\C$-vector space and $l \in \Z_{>0}$.
Put $W = V \otimes_\C \left( \C[z]/(z^l) \right)$ 
and let $N \in \End_\C(W)$ be the multiplication by $\zeta$.
Then $X \in \End_\C(W)$ is contained in $\range \ad_N$
if and only if 
\[
\sum_{j=1}^l N^{l-j} X N^{j-1} =0.
\] 
\end{lmm}

\begin{proof}
Define a linear map $\varphi \colon \End_\C(W) \to \End_\C(W)$ by
\[
\varphi(X) = \sum_{j=1}^l N^{l-j} X N^{j-1}.
\]
We first show $\range \ad_N \subset \Ker \varphi$.
For $X \in \End_\C(W)$, 
\begin{align*}
\sum_{j=1}^l N^{l-j} [N,X] N^{j-1} 
&= \sum_{j=1}^l N^{l-j+1} X N^{j-1}
- \sum_{j=1}^l N^{l-j} X N^j 
\\
&= N^l X - X N^l =0.
\end{align*}
Next we show $\rank \varphi = l\dim V = \dim \Ker \ad_N$.
According to the decomposition
\[
\End_\C(W) = \bigoplus_{i,j =0}^{l-1} \Hom_\C(V \otimes \C z^j, V \otimes \C z^i),
\]
we write each $X \in \End_\C(W)$ as 
\[
X=(X_{ij}), \quad
X_{ij} \in \Hom_\C(V \otimes \C z^j, V \otimes \C z^i) \simeq \End_\C(V).
\]
Then a direct calculation shows
\[
\varphi(X)_{ij} = \sum_{a=l-i}^{l-j} X_{a+i-l,a+j-1} 
= \sum_{a=0}^{i-j} X_{a,a+j-i+l-1}.
\]
Hence
\begin{equation*}
\range \varphi = 
\rset{Y=(Y_{ij}) \in \End_\C(W)}%
{
\begin{aligned}
Y_{ij}&=0 & &(i<j), \\
Y_{ij}&=Y_{ab}& &(i-j=a-b)
\end{aligned}
}.
\end{equation*}
This implies $\rank \varphi = l \dim V$.
\end{proof}

\begin{lmm}\label{lmm:Xi}
Assume that $(\nabla_s)_{s \in \Delta}$ is isomonodromic 
with flat extension $\nabla = d_{\bbP^1 \times \Delta} - \calA$ and 
let $\calA = \Omega_\infty + Q(x 1_W -T)^{-1}(d_{\bbP^1}x + \Theta)P$
be the extended AHHP representation.
Then there exists a unique $\gl(W)$-valued one-form $\Xi$ on $\Delta$ 
such that $[\Xi,T] = d_\Delta T + \Theta + [PQ, \Theta]$ and that $Q, P$ satisfies 
the differential equations
\begin{equation}\label{eq:diff-QP}
d_\Delta Q = \Omega_\infty Q - Q \Xi, \quad
d_\Delta P = -P \Omega_\infty + \Xi P.
\end{equation}
Furthermore, it satisfies
\[
d_\Delta \Theta - [\Theta , \Xi] =0, 
\quad 
d_\Delta \Xi - \Xi \wedge \Xi =0.
\]
\end{lmm}

\begin{proof}
We first show that $d_\Delta T + \Theta + [PQ, \Theta]$ takes values in $\range \ad_T$.
As $\ad_T$ preserves each $\Hom_\C(W_j,W_i)$ and is invertible on it 
if $i \neq j$, it is sufficient to show that 
$d_\Delta t_i\,1_{W_i} + \Theta_i + [P_i Q_i, \Theta_i]$ takes values in 
$\range \ad_{N_i}$ for each $i$.
Furthermore, under the notation used in \eqref{eq:action-Q} and \eqref{eq:action-P} 
we have 
\begin{align*}
P_i Q_i &= \sum_{j,l \geq 0} N_i^j Y_i\, \bar{g}^{(i)}_j\, g^{(i)}_l X_i N_i^l \\
&\equiv \sum_{j,l \geq 0} Y_i\, \bar{g}^{(i)}_j\, g^{(i)}_l X_i N_i^{j+l} \equiv Y_i X_i
\quad (\text{mod}\ \range \ad_{N_i}).
\end{align*}
Therefore we may replace the term $[P_i Q_i, \Theta_i]$ with $[Y_i X_i, \Theta_i]$
(note that $\Theta_i$ commutes with $N_i$).
Recall that $X_i, Y_i, \Theta_i$ respect the decompositions 
$V = \bigoplus_a V^{(i)}_a$, $W_i = \bigoplus_a W^{(i)}_a$
and the components $X^{(i)}_a, Y^{(i)}_a, \Theta^{(i)}_a$ are 
explicitly given in the proof of \lmmref{lmm:Theta}.
We have $\Theta^{(i)}_0 = d_\Delta x_i\,1_{W^{(i)}_0} = -d_\Delta t_i\,1_{W^{(i)}_0}$
and thus
\[
\left( d_\Delta t_i\,1_{W_i} + \Theta_i + [Y_i X_i, \Theta_i] \right)|_{W^{(i)}_0} 
= [Y^{(i)}_0 X^{(i)}_0, -d_\Delta t_i\,1_{W^{(i)}_0}] =0.
\]
For $a \neq 0$, from the definition \eqref{eq:Theta} of $\Theta^{(i)}_a$ 
we see that $\Theta^{(i)}_a + d_\Delta t_i\, 1_{W^{(i)}_a}$ takes values in 
$x_i \End_{\calR_{i,a}}(W^{(i)}_a)$.
The obvious identity $[x_i \partial_{x_i}, x_i] = x_i$ 
in $\End_\C(\calR_{i,a})$ shows $x_i \End_{\calR_{i,a}}(W^{(i)}_a) \subset \range \ad_{N^{(i)}_a}$.
Furthermore, we find
\begin{align*}
\sum_{j=1}^{k_{i,a}} (N^{(i)}_a)^{k_{i,a}-j} 
&[Y^{(i)}_a X^{(i)}_a, \Theta^{(i)}_a] (N^{(i)}_a)^{j-1} \\
&= \left[ \sum_{j=1}^{k_{i,a}} (N^{(i)}_a)^{k_{i,a}-j} 
Y^{(i)}_a X^{(i)}_a (N^{(i)}_a)^{j-1}, \Theta^{(i)}_a \right] \\
&= [\wt{Y}^{(i)}_a \wt{X}^{(i)}_a, \Theta^{(i)}_a] =0,
\end{align*}
which together with \lmmref{lmm:adjoint} 
implies $[Y^{(i)}_a X^{(i)}_a, \Theta^{(i)}_a]$ takes values in 
$\range \ad_{N^{(i)}_a}$.

Thus we can take a $\gl(W)$-valued one-form $\Xi$ on $\Delta$ 
such that 
\[
d_\Delta T + \Theta + [PQ, \Theta] = [\Xi,T] = [x 1_W -T, \Xi].
\]
We substitute it into the following formula
\begin{equation}\label{eq:flat1-1}
\begin{aligned}
d_\Delta A &+ d_{\bbP^1} \Omega - [A, \Omega] \\
&=(d_\Delta Q - \Omega_\infty Q) \wedge (x 1_W -T)^{-1}P\,d_{\bbP^1}x \\
&\quad -Q(x 1_W -T)^{-1} d_{\bbP^1}x \wedge (d_\Delta P + P \Omega_\infty) \\
&\quad + Q(x 1_W -T)^{-1}(d_\Delta T + \Theta + [PQ, \Theta])(x 1_W -T)^{-1}P \wedge d_{\bbP^1}x,
\end{aligned}
\end{equation}
which is verified by substituting $A=Q(x 1_W -T)^{-1}P\, d_{\bbP^1}x$ 
and $\Omega = \Omega_\infty + Q(x 1_W -T)^{-1}\Theta P$. 
By the flatness condition, we then obtain
\begin{equation}\label{eq:flat1-1'}
\begin{aligned}
0 
&=(d_\Delta Q - \Omega_\infty Q + Q \Xi) \wedge (x 1_W -T)^{-1}P\,d_{\bbP^1}x \\
&\quad -Q(x 1_W -T)^{-1} d_{\bbP^1}x \wedge (d_\Delta P + P \Omega_\infty -\Xi P),
\end{aligned}
\end{equation}
which together with \lmmref{lmm:stability} implies that 
there exists a unique $\Ker \ad_T$-valued one-form $\Xi'$ on $\Delta$ 
such that
\[
d_\Delta Q - \Omega_\infty Q + Q \Xi = Q \Xi', \quad 
d_\Delta P + P \Omega_\infty - \Xi P = -\Xi' P.
\]
We may now replace $\Xi$ with $\Xi - \Xi'$
so that it satisfies all the desired conditions.
The uniqueness of $\Xi$ follows from \lmmref{lmm:stability}.

The flatness condition also implies $d_\Delta \Omega - \Omega \wedge \Omega =0$.
The restriction of it to $z=\infty$ yields 
$d_\Delta \Omega_\infty - \Omega_\infty \wedge \Omega_\infty =0$.
Furthermore, we have
\begin{equation}\label{eq:omega}
\begin{aligned}
d_\Delta \Omega - \Omega \wedge \Omega &=
d_\Delta \Omega_\infty - \Omega_\infty \wedge \Omega_\infty 
+ Q(x 1_W -T)^{-1} d_\Delta \Theta \, P \\
&\quad + (d_\Delta Q - \Omega_\infty Q) \wedge (x 1_W -T)^{-1} \Theta P \\
&\quad - Q(x 1_W -T)^{-1}\Theta \wedge (d_\Delta P + P\Omega_\infty ) \\
&\quad + Q(x 1_W -T)^{-1} (d_\Delta T - \Theta PQ) \wedge \Theta  
(x 1_W -T)^{-1} P.
\end{aligned}
\end{equation}
Substituting the differential equations on $\Omega_\infty, Q, P$ obtained so far into the above, 
we obtain
\begin{equation}
\begin{aligned}
d_\Delta \Omega - \Omega \wedge \Omega
&= Q(x 1_W -T)^{-1} d_\Delta \Theta \, P \\
&\quad - Q \Xi \wedge (x 1_W -T)^{-1} \Theta P 
- Q(x 1_W -T)^{-1}\Theta \wedge \Xi P \\
&\quad + Q(x 1_W -T)^{-1} (d_\Delta T - \Theta PQ) \wedge \Theta  
(x 1_W -T)^{-1} P,
\end{aligned}
\end{equation}
and further substituting the equality
\[
d_\Delta T - \Theta PQ = [x 1_W -T, \Xi] - \Theta - PQ \Theta,
\]
we obtain
\begin{equation}\label{eq:omega2}
\begin{aligned}
d_\Delta \Omega - \Omega \wedge \Omega 
&= Q(x 1_W -T)^{-1}(d_\Delta \Theta -\Theta \wedge \Xi - \Xi \wedge \Theta ) P \\
&\quad - Q(x 1_W -T)^{-1} (\Theta + PQ \Theta ) \wedge \Theta (x 1_W -T)^{-1} P.
\end{aligned}
\end{equation}
The second term on the right hand side is zero because $\Theta \wedge \Theta =0$.
Furthermore, 
\begin{align*}
[d_\Delta \Theta -[\Theta, \Xi],T] &=
d_\Delta [\Theta ,T] + [\Theta ,d_\Delta T] - [[\Theta , \Xi],T] \\
&= -[\Theta ,[ \Xi,T]] - [\Xi ,[\Theta ,T]] \\
&= -[\Theta , d_\Delta T + \Theta + [PQ ,\Theta ]] \\
&= -[\Theta , [PQ ,\Theta ]] = \frac12 [[\Theta ,\Theta],PQ] =0. 
\end{align*}
Therefore \lmmref{lmm:stability} shows $d_\Delta \Theta -[\Theta , \Xi] =0$.

We finally show $d_\Delta \Xi - \Xi \wedge \Xi =0$. 
Taking the exterior derivative in the $\Delta$-direction
of $d_\Delta Q = \Omega_\infty Q - Q \Xi$,
we find
\begin{align*}
0 = d_\Delta^2 Q 
&= d_\Delta \Omega_\infty  \cdot Q - \Omega_\infty \wedge d_\Delta Q 
- d_\Delta Q \wedge \Xi - Q d_\Delta \Xi
\\
&= d_\Delta \Omega_\infty  \cdot Q - \Omega_\infty \wedge (\Omega_\infty Q - Q \Xi)
- (\Omega_\infty Q - Q \Xi) \wedge \Xi - Q d_\Delta \Xi
\\
&= (d_\Delta \Omega_\infty - \Omega_\infty \wedge \Omega_\infty )Q 
-Q (d_\Delta \Xi - \Xi \wedge \Xi ) 
\\
&= -Q (d_\Delta \Xi - \Xi \wedge \Xi ).
\end{align*}
On the other hand, we have
\begin{equation}
\label{eq:PQ}
\begin{aligned}
d_\Delta (PQ)  &= d_\Delta P \cdot Q + P \cdot d_\Delta Q \\
&= (-P \Omega_\infty + \Xi P) Q + P( \Omega_\infty Q - Q \Xi ) 
= [\Xi,PQ],
\end{aligned}
\end{equation}
and thus
\begin{align*}
[d_\Delta \Xi, T] 
&= d_\Delta [\Xi ,T] + [\Xi ,d_\Delta T] \\
&= d_\Delta (\Theta + [PQ, \Theta])+ [\Xi ,d_\Delta T]  \\
&= [\Theta , \Xi] + d_\Delta [PQ, \Theta]
+ [\Xi ,d_\Delta T] \\
&= [\Theta , \Xi] + [[\Xi ,PQ], \Theta ] + [PQ, [\Theta , \Xi ]] 
+ [\Xi ,d_\Delta T] \\
&= [\Theta + [PQ, \Theta ] + d_\Delta T, \Xi ] \\
&= [[\Xi , T], \Xi ] = [\Xi \wedge \Xi , T].
\end{align*}
Hence $d_\Delta \Xi - \Xi \wedge \Xi$ commutes with $T$ and satisfies
\[
Q(x 1_W -T)^{-1}(d_\Delta \Xi - \Xi \wedge \Xi)P=0.
\]
\lmmref{lmm:stability} shows 
$d_\Delta \Xi - \Xi \wedge \Xi =0$.
\end{proof}

\begin{example}\label{ex:Schlesinger2}
In the situation of Example~\ref{ex:Schlesinger1},
the condition
\[
[\Xi,T] = d_\Delta T + \Theta + [PQ, \Theta] = -[PQ, d_\Delta T]
\]
determines the $\Hom_\C(W_j,W_i)$-block $\Xi_{ij}$ of $\Xi$ for each distinct $i,j$: 
\[
\Xi_{ij} =  -P_i Q_j\, d_\Delta \log (t_i-t_j).
\]
We show that the block diagonal part of $\Xi$ can be eliminated 
by the $G_T$-action on $\bbM^{st}$.
For a holomorphic map 
$f \colon \Delta \to G_T = \prod_{i=1}^m \GL(W_i)$,
we have
\begin{align*}
d_\Delta (Q f^{-1}) &= d_\Delta Q \cdot f^{-1} - Q f^{-1} d_\Delta f \cdot f^{-1} \\
&= (\Omega_\infty Q - Q \Xi) f^{-1} - Q f^{-1} d_\Delta f \cdot f^{-1} \\
&= \Omega_\infty (Q f^{-1}) - (Q f^{-1}) f[\Xi],
\end{align*}
and similarly 
\[
d_\Delta (f P) = - (f P) \Omega_\infty + f[\Xi](f P).
\]
Thanks to the flatness condition $d_\Delta \Xi - \Xi \wedge \Xi =0$,
we can take $f$ so that $f[\Xi]$ is block off-diagonal.
\end{example}

\subsection{Proof of the main theorem}\label{subsec:proof}

Now we prove \thmref{thm:main}.

\begin{proof}[Proof of \thmref{thm:main}]
We first show that the dual family $(\nabla^\vee_s)$ defined in 
\secref{subsec:dual} satisfies condition (ii). 
Assume that $(\nabla_s)_{s \in \Delta}$ is isomonodromic with flat extension 
$\nabla = d_{\bbP^1 \times \Delta} - \calA$.
Let $\calA = \Omega_\infty + Q(x 1_W - T)^{-1}(d_{\bbP^1}x + \Theta)P$
be the extended AHHP representation and 
$\Xi$ as in \lmmref{lmm:Xi}.
We then show that
\[
\nabla^\vee=d_{\bbP^1 \times \Delta} - B - \Omega^\vee, \quad 
\Omega^\vee := \Theta y + \Xi
\]
is flat. A direct calculation shows
\begin{equation}\label{eq:flat2-1}
\begin{aligned}
d_\Delta B + d_{\bbP^1} \Omega^\vee - [B, \Omega^\vee] 
&= [\Theta ,T] \wedge y d_{\bbP^1}y \\
&\quad - (d_\Delta T + \Theta + [PQ,\Theta] -[\Xi,T]) \wedge d_{\bbP^1}y \\
&\quad - (d_\Delta (PQ) -[\Xi,PQ]) \wedge y^{-1}d_{\bbP^1}y.
\end{aligned}
\end{equation}
Since $\Theta_i$ commutes with $N_i$, their direct sum 
$\Theta = \bigoplus_i \Theta_i$ commutes with $T$.
Also, \lmmref{lmm:Xi} and equality~\eqref{eq:PQ} imply 
that the second and third terms in the right hand side 
is zero.
Hence $d_\Delta B + d_{\bbP^1} \Omega^\vee - [B, \Omega^\vee] =0$.
We also have
\begin{equation}\label{eq:flat2-2}
d_\Delta \Omega^\vee - \Omega^\vee \wedge \Omega^\vee 
= - \Theta \wedge \Theta \, y^2 + (d_\Delta \Theta - [\Theta , \Xi]) y 
+(d_\Delta \Xi - \Xi \wedge \Xi).
\end{equation}
Lemmas~\ref{lmm:Theta} and \ref{lmm:Xi} imply that the above is zero.
Hence $\nabla^\vee$ is flat.

Next we show that $(\nabla^\vee_s)$ satisfies condition (iii).
Assume that the object $(V, A(s)) \in \calS$ 
is irreducible for any $s \in \Delta$, or $L_0$ is invertible.
Assume further that there exist $\gl(W)$-valued one-forms $\Theta, \Xi$ on $\Delta$
such that the meromorphic connection
\[
\nabla^\vee =d_{\bbP^1 \times \Delta} - B -(\Theta y + \Xi)
\]
is flat.
Then we show that $(\nabla_s)_{s \in \Delta}$ is isomonodromic.
Equalities~\eqref{eq:flat2-1} and \eqref{eq:flat2-2} imply 
$d_\Delta (PQ) -[\Xi,PQ] =0$ and 
\begin{equation}\label{eq:five}
\begin{aligned}
&[T,\Theta]=0,\quad \Theta \wedge \Theta =0,\quad d_\Delta \Theta = [\Theta,\Xi], \\
&[T,\Xi]+d_\Delta T+\Theta+[PQ,\Theta]=0,\quad d_\Delta \Xi = \Xi \wedge \Xi.
\end{aligned}
\end{equation}
We rewrite the third equality as
\[
(d_\Delta P - \Xi P)Q = -P(d_\Delta Q + Q\Xi ).
\]
By the first assumption, $P$ is injective and $Q$ is surjective 
(note that $-QP = \res_\infty A$ is contained in the adjoint orbit of $L_0$).
Hence there exist $\gl(V)$-valued one-forms $\Omega_\infty, \Omega'_\infty$ such that
$d_\Delta P - \Xi P = P \Omega'_\infty$, $d_\Delta Q + Q\Xi = \Omega_\infty Q$.
Substituting them into the above equality, we obtain
\[
P (\Omega_\infty + \Omega'_\infty )Q =0,
\]
which implies $\Omega'_\infty =-\Omega_\infty$.
Now we define a meromorphic connection $\nabla = d_{\bbP^1 \times \Delta} -\calA$ on 
$\calO_{\bbP^1 \times \Delta} \otimes_\C V$ by
\[
\calA = A + \Omega, \quad \Omega = \Omega_\infty + Q(x 1_W - T)^{-1} \Theta P, 
\]
and we show that it is flat.
First, the substitution of the equality 
$d_\Delta T + \Theta + [PQ,\Theta] = [\Xi,T] = [x 1_W - T, \Xi]$ 
into \eqref{eq:flat1-1} yields
\begin{equation}
\begin{aligned}
d_\Delta A &+ d_{\bbP^1} \Omega - [A, \Omega] \\
&=(d_\Delta Q - \Omega_\infty Q + Q \Xi) \wedge (x 1_W -T)^{-1}P\,d_{\bbP^1}x \\
&\quad -Q(x 1_W -T)^{-1} d_{\bbP^1}x \wedge (d_\Delta P + P \Omega_\infty -P \Xi) =0.
\end{aligned}
\end{equation}
Next, taking the exterior derivative (in the $\Delta$-direction)
of the equality $d_\Delta Q = \Omega_\infty Q - Q \Xi$, 
we find
\begin{align*}
0 = d_\Delta^2 Q 
&= d_\Delta \Omega_\infty  \cdot Q - \Omega_\infty \wedge d_\Delta Q 
- d_\Delta Q \wedge \Xi - Q d_\Delta \Xi
\\
&= d_\Delta \Omega_\infty  \cdot Q - \Omega_\infty \wedge (\Omega_\infty Q - Q \Xi)
- (\Omega_\infty Q - Q \Xi) \wedge \Xi - Q d_\Delta \Xi
\\
&= (d_\Delta \Omega_\infty - \Omega_\infty \wedge \Omega_\infty )Q 
-Q (d_\Delta \Xi - \Xi \wedge \Xi ) 
\\
&= (d_\Delta \Omega_\infty - \Omega_\infty \wedge \Omega_\infty )Q.
\end{align*}
Since $Q$ is surjective, we obtain $d_\Delta \Omega_\infty - \Omega_\infty \wedge \Omega_\infty =0$.
Hence equality \eqref{eq:omega} reduces to equality \eqref{eq:omega2},
which together with $\Theta \wedge \Theta =0$, $d_\Delta \Theta - [\Theta,\Xi]=0$ 
imply that $\nabla$ is flat.
\end{proof}

\subsection{Application to the additive middle convolution}

\thmref{thm:main} has the following corollary on 
the additive middle convolution (see \rmkref{rmk:mc}),
which generalizes the result of Haraoka-Filipuk~\cite{HF}:

\begin{crl}\label{crl:mc}
Let $(\nabla_s)_{s \in \Delta}$, $\nabla_s = d_{\bbP^1} -A(s)$ 
be an isomonodromic family of meromorphic connections on 
$\calO_{\bbP^1} \otimes_\C V$ with singularity data $(\bm{\Lambda},\bfL)$ 
satisfying assumptions~{\rm (A1--3)}.
Assume that $L_0$ is invertible.
%
Then there exists a pair $(\calV^\alpha,\nabla^\alpha)$ of a holomorphic vector bundle 
and a flat meromorphic connection on $\bbP^1 \times \Delta$ 
with poles on $\bigcup_{i=0}^m t_i(\Delta)$ 
such that $(\calV^\alpha,\nabla^\alpha) |_{\bbP^1_s}$ is isomorphic to 
the connection given by $\mc_\alpha(V,A(s))$ for any $s \in \Delta$.
\end{crl}

\begin{proof}
Since $Q(s)P(s) = - \res_\infty A(s)$ is invertible and $\alpha \neq 0$, 
we have
\[
\dim \Ker (P(s)Q(s)+\alpha\,1_W) = \dim \Ker (Q(s)P(s)+\alpha\,1_V) 
= \dim \Ker (L_0 + \alpha\,1_V).
\]
Hence the rank of the underlying bundle of the connection 
given by $\mc_\alpha(V,A(s))$ is equal to
\[
\rank (P(s)Q(s)+\alpha\,1_W) = \dim W - \dim \Ker (L_0 + \alpha\,1_V),
\]
which does not depend on $s \in \Delta$.
Fix a $\C$-vector space $V^\alpha$ with 
$\dim V^\alpha = \rank (P(s)Q(s)+\alpha\,1_W)$.
We can take an analytic family of linear maps 
\[
Q^\alpha(s) \colon W \to V^\alpha, 
\quad P^\alpha(s) \colon V^\alpha \to W, 
\quad s \in \Delta 
\]
so that for any $s \in \Delta$, 
$Q^\alpha(s)$ is injective, $P^\alpha(s)$ is surjective and 
$P^\alpha(s)Q^\alpha(s) = P(s)Q(s) + \alpha\,1_W$.
Set 
\[
A^\alpha(s) = Q^\alpha(s) (x 1_W - T(s))^{-1} P^\alpha(s)\, d_{\bbP^1}x, 
\quad s \in \Delta.
\]
Then we have $(V^\alpha, A^\alpha(s)) \simeq \mc_\alpha(V,A(s))$ 
for any $s \in \Delta$.
Since the meromorphic connection $\nabla^\vee$ is flat,
the connection 
\[
\nabla^\vee + \frac{\alpha\,1_W}{y}\,d_{\bbP^1}y = 
d_{\bbP^1 \times \Delta} + \left( T+\frac{PQ+\alpha\,1_W}{y}\right)d_{\bbP^1}y 
-(\Theta y + \Xi)
\]
is also flat.
Thus the arguments on condition (iii) in \thmref{thm:main} 
show that there exists a one-form $\Omega_\infty$ on $\Delta$ such that 
the connection 
$\nabla^\alpha := d_{\bbP^1 \times \Delta} 
- \Omega_\infty - Q^\alpha (x 1_W - T)^{-1}(d_{\bbP^1}x + \Theta)P^\alpha$ is flat.
\end{proof}

\begin{rmk}\label{rmk:admissible}
(i)~Equality~(12), Remarks~14,~16 in \cite{Yam11} 
and Theorems~5.1,~11.1 in \cite{Was} show that 
the family $(\nabla^\alpha_s)_{s \in \Delta}$, 
$\nabla^\alpha_s := d_{\bbP^1} - A^\alpha(s)$ 
introduced above is admissible if $\lambda + \alpha \notin \Z$ 
for all eigenvalues $\lambda$ of all $L^{(i)}_0$, $i=1,2, \dots , m$ and 
$L_0$ has no integral eigenvalue.

(ii)~For given isomonodromic family $(\nabla_s)_{s \in \Delta}$ 
with singularity data $(\bm{\Lambda},\bfL)$ satisfying $L_0=0$,
the family obtained by applying Hiroe's operator $\mathrm{mc}_{\mathbf{i}}$ (see \cite[Section~5.1]{Hir})
to $\nabla_s, s \in \Delta$ is admissible (and isomonodromic) if $\sum_{i=1}^m \lambda_i \not\in \Z$
for any tuple $(\lambda_i)_{i=1}^m$ with $\lambda_i$ an eigenvalue of $L^{(i)}_0$.
\end{rmk}

\providecommand{\bysame}{\leavevmode\hbox to3em{\hrulefill}\thinspace}



\begin{thebibliography}{99}

\bibitem{AHH90}
M.~R. Adams, J.~Harnad, and J.~Hurtubise, \emph{Isospectral {H}amiltonian flows
  in finite and infinite dimensions. {II}. {I}ntegration of flows}, Comm. Math.
  Phys. \textbf{134} (1990), no.~3, 555--585. 

\bibitem{AHP88}
M.~R. Adams, J.~Harnad, and E.~Previato, \emph{Isospectral {H}amiltonian flows
  in finite and infinite dimensions. {I}. {G}eneralized {M}oser systems and
  moment maps into loop algebras}, Comm. Math. Phys. \textbf{117} (1988),
  no.~3, 451--500. 

\bibitem{Ari}
D.~Arinkin, \emph{Rigid irregular connections on {$\mathbb{P}^1$}}, Compos.
  Math. \textbf{146} (2010), no.~5, 1323--1338. 

\bibitem{BV}
D.~G. Babbitt and V.~S. Varadarajan, \emph{Formal reduction theory of
  meromorphic differential equations: a group theoretic view}, Pacific J. Math.
  \textbf{109} (1983), no.~1, 1--80. 

\bibitem{Boa01}
P.~Boalch, \emph{Symplectic manifolds and isomonodromic deformations}, Adv.
  Math. \textbf{163} (2001), no.~2, 137--205. 

\bibitem{Boa12}
\bysame, \emph{Simply-laced isomonodromy systems}, Publ. Math. Inst. Hautes
  \'Etudes Sci. \textbf{116} (2012), no.~1, 1--68.

\bibitem{Boa13}
\bysame, \emph{Geometry and braiding of {S}tokes data; fission and wild
  character varieties}, to appear in Ann. Math., arXiv:1111.6228.

\bibitem{CB}
W.~Crawley-Boevey, 
\emph{On matrices in prescribed conjugacy classes 
with no common invariant subspace and sum zero}, 
Duke Math. J. \textbf{118} (2003), no. 2, 339--352.

\bibitem{DR00}
M.~Dettweiler and S.~Reiter, \emph{An algorithm of {K}atz and its application
  to the inverse {G}alois problem}, J. Symbolic Comput. \textbf{30} (2000),
  no.~6, 761--798, Algorithmic methods in Galois theory.

\bibitem{DR07}
M.~Dettweiler and S.~Reiter,
\emph{Middle convolution of Fuchsian systems and 
the construction of rigid differential systems.} 
J. Algebra \textbf{318} (2007), no. 1, 1--24.

\bibitem{HF}
Y.~Haraoka and G.~Filipuk,
\emph{Middle convolution and deformation for Fuchsian systems},
J. Lond. Math. Soc. (2) \textbf{76} (2007), no.~2, 438–450. 

\bibitem{Har94}
J.~Harnad, \emph{Dual isomonodromic deformations and moment maps to loop
  algebras}, Comm. Math. Phys. \textbf{166} (1994), no.~2, 337--365.

\bibitem{Hir}
K.~Hiroe, \emph{Linear differential equations on the Riemann sphere 
and representations of quivers},
preprint, arXiv:1307.7438.

\bibitem{HY}
K.~Hiroe and D.~Yamakawa, 
\emph{Moduli spaces of meromorphic connections and quiver varieties},
preprint, arXiv:1305.4092.

\bibitem{HTT}
R.~Hotta, K.~Takeuchi, and T.~Tanisaki, \emph{{$D$}-modules, perverse sheaves,
  and representation theory}, Progress in Mathematics, vol. 236, Birkh\"auser
  Boston Inc., Boston, MA, 2008, Translated from the 1995 Japanese edition by
  Takeuchi.

\bibitem{JMMS}
M.~Jimbo, T.~Miwa, Y.~M{\^o}ri, and M.~Sato, \emph{Density matrix of an
  impenetrable {B}ose gas and the fifth {P}ainlev\'e transcendent}, Phys. D
  \textbf{1} (1980), no.~1, 80--158.

\bibitem{JMU}
M.~Jimbo, T.~Miwa, and K.~Ueno, \emph{Monodromy preserving deformation of
  linear ordinary differential equations with rational coefficients. {I}.
  {G}eneral theory and {$\tau $}-function}, Phys. D \textbf{2} (1981), no.~2,
  306--352.

\bibitem{Kat90}
N.~M. Katz, \emph{Exponential sums and differential equations}, 
Annals of Mathematics Studies, vol. 124, 
Princeton University Press, Princeton, NJ, 1990.

\bibitem{Kat}
\bysame, \emph{Rigid local systems}, 
Annals of Mathematics Studies, vol. 139, 
Princeton University Press, Princeton, NJ, 1996.

\bibitem{Kaw}
H.~Kawakami, 
\emph{Generalized Okubo systems and the middle convolution}, 
Int. Math. Res. Not. IMRN 2010, no. 17, 3394--3421.


\bibitem{SW}
G.~Sanguinetti and N.~M.~J. Woodhouse, \emph{The geometry of dual isomonodromic
  deformations}, J. Geom. Phys. \textbf{52} (2004), no.~1, 44--56.

\bibitem{Tak}
K.~Takemura, 
\emph{Introduction to middle convolution for differential equations 
with irregular singularities}, 
New trends in quantum integrable systems, 393--420, World Sci. Publ., 
Hackensack, NJ, 2011.

\bibitem{Was}
W.~Wasow,
\emph{Asymptotic expansions for ordinary differential equations. Reprint of the 1976 edition}, 
Dover Publications, Inc., New York, 1987.

\bibitem{Woo07}
N.~M.~J. Woodhouse, \emph{Duality for the general isomonodromy problem}, J.
  Geom. Phys. \textbf{57} (2007), no.~4, 1147--1170.

\bibitem{Yam11}
D.~Yamakawa, \emph{Middle convolution and {H}arnad duality}, Math. Ann.
  \textbf{349} (2011), no.~1, 215--262.
\end{thebibliography}
\end{document}